\def\T{{\mathbb T}}
\def\Z{{\mathbb Z}}
\def\R{{\mathbb R}}
\def\P{{\mathbb P}}
\def\E{{\mathbb E}}
\def\eps{\epsilon}
\newtheorem{thm}{Theorem}[section]
\newtheorem{lem}[thm]{Lemma}
\newtheorem{prop}[thm]{Proposition}
\theoremstyle{definition}
\theoremstyle{remark}
\newtheorem{rem}[thm]{Remark}
\newtheorem{exam}[thm]{Example}
\numberwithin{equation}{section}
\newcommand{\Lip}{\operatorname{Lip}}
\newcommand{\rmd}{{\rm d}}
\newcommand{\cB}{{\mathcal B}}
\newcommand{\cV}{{\mathcal V}}
\newcommand{\cG}{{\mathcal G}}
\newcommand{\cX}{{\mathcal X}}
\newcommand{\cF}{{\mathcal F}}
\newcommand{\cQ}{{\mathcal Q}}
\newcommand{\cW}{{\mathcal W}}
\newcommand{\tF}{{\widetilde F}}
\newcommand{\tW}{{\widetilde W}}
\newcommand{\tY}{{\widetilde Y}}
\newcommand{\bmu}{{\bar \mu}}
\newcommand{\bpi}{{\bar \pi}}
\newcommand{\bF}{{\bar F}}
\newcommand{\bY}{{\overline Y}}
\newcommand{\bW}{{\overline W}}
\newcommand{\bv}{{\bar v}}
\newcommand{\hv}{{\hat v}}
\newcommand{\infYj}{{\textstyle{\inf_{Y_j}}}}
\newcommand{\infbYj}{{\textstyle{\inf_{\bY\!_j}}}}
\begin{document}

\title[Wasserstein convergence rates in the invariance principle]{Wasserstein convergence rates in the invariance principle for nonuniformly hyperbolic flows}

\author{Ian Melbourne}
\address{I. Melbourne: Mathematics Institute, University of Warwick, Coventry, CV4 7AL, UK}
\email{i.melbourne@warwick.ac.uk}

\author{Zhe Wang}
\address{Z. Wang: School of Mathematical Sciences, Dalian University of Technology, Dalian
116024, P. R. China}
 \email{zwangmath@hotmail.com}

\date{25 September 2025; Updated 6 November 2025}

\subjclass[2010]{37C10, 37A50, 60F17, 37D20}

\keywords{Nonuniformly hyperbolic flow, suspension flow, invariance principle, rate of convergence, Wasserstein distance}

\begin{abstract}
We obtain $q$-Wasserstein convergence rates in the invariance principle for nonuniformly hyperbolic flows,
where $q\ge1$ depends on the degree of nonuniformity. Utilizing a martingale-coboundary decomposition for nonuniformly expanding semiflows, we extend techniques from the discrete-time setting to the continuous-time case. Our results apply to uniformly hyperbolic (Axiom A) flows, nonuniformly hyperbolic flows that can be modelled by suspensions over Young towers with exponential tails (such as dispersing billiard flows and the classical Lorenz attractor), and intermittent solenoidal flows.
\end{abstract}

\maketitle

\section{Introduction}
\label{sec:intro}

\setcounter{equation}{0}

The statistical properties of uniformly expanding/hyperbolic maps and flows are by now well understood, including the central limit theorem (CLT) for H\"older observables \cite{B75,R73,S60} and the almost sure invariance principle \cite{DP84}.
The latter result implies both the CLT and its functional form, commonly referred to as the weak invariance principle (WIP).
Subsequently, there has been great interest in such statistical properties for nonuniformly expanding/hyperbolic maps and flows (see for example~\cite{GM16,G04,HK82,MN05,MT04,MV16,Y98,Y99}). In addition, sharp Berry-Esseen estimates (convergence rates in the CLT) for nonuniformly expanding maps were obtained in~\cite{G05}.

In recent years, interest has turned to quantifying convergence rates in the WIP for dynamical systems, using metrics such as the L\'evy-Prokhorov and Wasserstein distances.
Antoniou and Melbourne~\cite{AM19} established convergence rates in the L\'evy-Prokhorov distance for nonuniformly hyperbolic maps.
More recently, Liu and Wang obtained $q$-Wasserstein convergence rates for deterministic hyperbolic maps~\cite{LW24a} and for sequential dynamical systems~\cite{LW24b}. Dedecker, Merlev\`ede and Rio \cite{DMR24} provided an improved rate in $2$-Wasserstein distance.
Regarding the multidimensional WIP (i.e.\ $\R^d$-valued observables with $d>1$), Paviato~\cite{P24} derived convergence rates in the L\'evy-Prokhorov distance and the $1$-Wasserstein distance
for nonuniformly expanding/hyperbolic maps and flows.
Subsequently, Fleming-V\'azquez~\cite{F25} obtained improved convergence rates in the $q$-Wasserstein distance for nonuniformly hyperbolic maps.

However, in the continuous-time literature, the study of convergence rates in the CLT and the WIP remains limited.
To the best of our knowledge, only three works have addressed this issue. P\`ene \cite{P02,P07} was the first to provide
convergence rates in the CLT for dispersive billiards with finite horizon. More recently, the aforementioned results of Paviato \cite{P24} on convergence rates in the WIP cover both discrete and continuous time.

A standard approach for deriving the CLT or WIP for flows from the corresponding result for maps is via inducing (see for example~\cite{GM16,MT04,R73}).
Indeed, the error rates in the CLT for flows in~\cite{P02,P07} are obtained in this way, though the rate for flows is weaker than the rate for maps. In this paper we pursue a different method, following Paviato~\cite{P24}, working with the flow directly and obtaining exactly the same error rate as for maps. The key technique in~\cite{P24} is a secondary martingale-coboundary decomposition for nonuniformly expanding semiflows
extending the work of \cite{KKM18} for nonuniformly expanding maps.

For scalar observables, Paviato~\cite{P24} obtains the Prokhorov convergence rate $n^{-\frac{1}{4}}(\log n)^{\frac{3}{4}}$ in the WIP for uniformly hyperbolic maps and flow, and the rate $n^{-\frac14+\delta}$ for nonuniformly hyperbolic flows where $\delta>0$ depends on the degree of nonuniformity $p\in(2,\infty)$ and $\delta\to0$ as $p\to\infty$.
The rates for nonuniformly hyperbolic flows in~\cite{P24} are the same as those in~\cite{AM19} for maps. The paper~\cite{P24} also considers the $1$-Wasserstein distance for vector-valued observables of nonuniformly hyperbolic maps and flows, obtaining a rate $n^{-\frac16}(\log n)^{\frac13}$ as $p\to 3^-$
(independent of the dimension of the observable). However, the obtained rate does not improve for $p\ge3$.

In this paper, as in~\cite{LW24a}, we restrict to scalar observables but consider convergence rates for the $q$-Wasserstein distance $1\le q\le \frac{p}{2}$, $p>2$. We establish the convergence rate
$n^{-\frac14}(\log n)^{\frac12}$ for all nonuniformly hyperbolic flows with $p\ge4$.
(In the process, we improve the rates obtained in~\cite{LW24a} for maps.)
Our results are new for $q\ge2$ and also significantly improve the results in~\cite{P24} for $q=1$ in the scalar observable case.

Our results are applicable to uniformly hyperbolic (Axiom A) flows  and a class of nonuniformly hyperbolic flows modelled by suspensions
over Young towers with exponential tails, such as  planar periodic Lorentz gases with finite horizon, Lorentz gases with cusps and the classical Lorenz attractor. In such examples, we obtain the convergence rate $n^{-\frac14}(\log n)^{\frac12}$ in the $q$-Wasserstein distance for all $q\ge1$, see Section~\ref{sec:mainflows}.
Our results also cover
intermittent semiflows (Example~\ref{ex:lsv}) and intermittent solenoidal flows (Example~\ref{ex:solenoid}).

The remainder of this paper is organized as follows. In Section~\ref{sec:main}, we introduce nonuniformly expanding semiflows and state the main results.
Section~\ref{sec:MCD} summarizes some recent results of \cite{P24} on martingale approximation.
The proof of the main result is given in Section~\ref{sec:exp2}. In Section~\ref{sec:NUHflow}, we extend the convergence rate to nonuniformly hyperbolic flows.

\subsubsection*{Notation:}
Throughout the paper, we use $1_A$ to denote the indicator function of measurable set $A$
 and $\|\cdot\|_{L^p}$ means the $L^p$-norm.
As usual, $a_n=O(b_n)$ means that there exists a constant $C>0$ such that $|a_n|\le C |b_n|$ for all $n\ge 1$.
For simplicity we write $C$ to denote constants independent of $n$, and $C$ may change from line to line.
We use $\to_{w}$ to denote weak convergence in the sense of probability measures and $\to_{d}$ means convergence in distribution.
We use $\P_X$ to denote the law/distribution of random variable $X$ and use $X=_d Y$ to mean $X, Y$ sharing the same distribution.

We denote by $C[0,1]$ the space of all continuous functions on $[0,1]$ equipped with the supremum distance $d_C$, that is
\[
d_C(x,y):=\sup_{t\in [0,1]}|x(t)-y(t)|, \quad x,y\in C[0,1].
\]

Let $(M,d)$ be a bounded metric space and let $\eta\in(0,1]$.
We denote by $C^\eta(M)$ the space of $C^\eta$ functions $v:M\to\R$ with
finite H\"older norm $\|v\|_{C^\eta}=|v|_\infty+|v|_{C^\eta}$, where
\[
|v|_\infty=\sup_{x\in M}|v(x)|, \qquad |v|_{C^\eta}=\sup_{x,y\in M\atop x\neq y}\frac{|v(x)-v(y)|}{d(x,y)^\eta}.
\]

\section{Statement of the main results}
\label{sec:main}

\setcounter{equation}{0}

\subsection{Nonuniformly expanding maps}
\label{sec:NUE}

Let $(X,d)$ be a bounded metric space with Borel probability measure $\rho$ and let $T:X\to X$ be a nonsingular, ergodic transformation. Suppose that $Y$ is a subset of $X$ with $\rho(Y)>0$, and $\{Y_j\}$ is an at most countable measurable partition of $Y$ with $\rho(Y_j)>0$.
Let $r:Y\to \Z^+$ be constant on each $Y_j$ and such that $T^{r(y)}y\in Y$ for all $y\in Y$. We call $r$ the {\em return time} and $F=T^r:Y\to Y$ the {\em induced map}.

We suppose that there are constants $\lambda>1$, $C\ge 1$, $\eta\in (0,1]$ such that for each $j\ge 1$,
\begin{enumerate}
  \item $F|_{Y_j}:Y_j\to Y$ is a (measure-theoretic) bijection,
  \item $d(Fx,Fy)\geq\lambda d(x,y)$ for all $x,y\in Y_j$,
  \item $d(T^{\ell}x,T^{\ell}y)\leq C d(Fx,Fy)$ for all $x,y\in Y_j$, $0\leq \ell < r(j)$,
  \item $g_j=\frac{\mathrm{d}\rho|_{Y_j}}{\mathrm{d}\rho|_{Y_j}\circ F}$ satisfies $|\log g_j(x)-\log g_j(y)|\leq Cd(Fx,Fy)^{\eta}$ for all $x,y\in Y_j$.
\end{enumerate}
Assume that $r\in L^p(Y)$ for some $p\ge1$; then we call $T:X\to X$ a \emph{nonuniformly expanding map of order $p$}.

As a consequence of conditions (1), (2) and (4), the map $F$ is a \emph{(full-branch) Gibbs-Markov map}~\cite{AD01}.
It is standard that there exists a unique absolutely continuous $F$-invariant probability measure $\mu_Y$ on $Y$.

\subsection{Nonuniformly expanding semiflows}
\label{sec:NUEsemi}

Let $\Psi_t:M\to M$ be a semiflow on a bounded metric space $(M,d)$, satisfying $\Psi_0=\text{Id}$ and $\Psi_{t+s}=\Psi_t\circ\Psi_s$ for $s,t\ge 0$. We assume that there exists $C>0$ such that
\begin{equation}\label{eq:con}
d(\Psi_tx, \Psi_ty)\le Cd(x,y) \quad\text{~for~all~} t\in [0,1],\ x,y\in M,
\end{equation}
and
\begin{equation}\label{eq:tim}
d(\Psi_tx, \Psi_sx)\le C|t-s|  \quad\text{~for~ all~} s,t\ge 0, \ x\in M.
\end{equation}

Let $X\subset M$  be a Borel set and define the first return time $h:X\to \R^{+}$, $h(x)=\inf\{t>0:\Psi_tx\in X\}$.
We assume that $h\in C^\eta(X)$ for some fixed $\eta\in(0,1]$, and that $\inf h\ge 1$. Such a function $h$ is often called a {\em roof function}.
The map $T=\Psi_h:X\to X$ is called a {\em Poincar\'e map}. If $T$ is a nonuniformly expanding map of order $p\ge1$ as described in  Subsection~\ref{sec:NUE},
then we call the semiflow $\Psi_t:M\to M$ a {\em nonuniformly expanding semiflow of order $p$}.

Define the induced roof function
\[
\varphi:Y\to [1,\infty), \qquad \varphi(y)=\sum_{i=0}^{r(y)-1}h(T^iy).
\]
Since $r\in L^p(Y)$ and $\varphi\le |h|_\infty r$,
we have $\varphi\in L^p(Y)$.
We define the suspension $Y^{\varphi}=\{(y,u)\in Y\times \R: 0\le u\le \varphi(y)\}/\sim$,
where $(y,\varphi(y))\sim (Fy,0)$. The suspension semiflow $F_t:Y^{\varphi}\to Y^{\varphi}$ is given by $F_t(y,u)=(y,u+t)$ computed modulo identifications.
Also, define the ergodic $F_t$-invariant probability measure $\mu^\varphi=(\mu_Y\times \text{Lebesgue})/\bar \varphi$,
where $\bar\varphi=\int_Y\varphi\,\rmd \mu_Y$.

We have a projection $\pi_M:Y^{\varphi}\to M$ given by $\pi_M(y,u)=\Psi_u y$ and it is a semiconjugacy between $F_t$ and $\Psi_t$,
satisfying $\Psi_t\circ \pi_M=\pi_M\circ F_t$. Hence $\mu_M=(\pi_M)_{*}\mu^{\varphi}$ is an ergodic $\Psi_t$-invariant probability measure on $M$.

\subsection{Statement of the main results}
\label{sec:statemain}

Let $C_0^{\eta}(M)=\{v\in C^{\eta}(M): \int_{M} v \,\rmd \mu_M=0\}$.
Given $v\in C_0^{\eta}(M)$, we denote $v_t=\int_{0}^{t}v\circ \Psi_s\,\rmd s$ and define the continuous processes $W_n\in C[0,1]$, $n\ge1$, as
\[
W_n(t):=\frac{1}{\sqrt{n}}\int_0^{nt}v\circ\Psi_s\,\rmd s, \quad t\in [0,1].
\]

The following result is standard; see \cite{KM16,KKM22,MN05,MT04,MT12} for details.
\begin{prop}\label{prop:summ}
Let $\Psi_t:M\to M$ be a nonuniformly expanding semiflow of order $p>2$ and $v\in C_0^{\eta}(M)$. Then

$(a)$ {\em(CLT)} The limit $\sigma^2=\lim_{t\to\infty}t^{-1}\int_{M}v_t^2\,\rmd \mu_M$ exists, and $t^{-1/2}v_t\to_d N(0,\sigma^2)$ as $t\to\infty$.

$(b)$ {\em(WIP)} $W_n\to_w W$ in $C[0,1]$ as $n\to\infty$, where $W$ is a Brownian motion with mean zero and variance $\sigma^2>0$.

$(c)$ {\em(Moment bounds)} There exists $C>0$ such that $\big\|\sup_{t\in [0,K]}|v_t|\,\big\|_{L^{2(p-1)}(M)}\le CK^{1/2}\|v\|_{C^\eta}$ for all $K>0$.
\end{prop}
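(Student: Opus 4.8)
The plan is to deduce all three statements from their discrete-time counterparts for the Poincar\'e map $T=\Psi_h\colon X\to X$, which by hypothesis is a nonuniformly expanding map of order $p>2$, with induced Gibbs--Markov map $F=T^r$ on $Y$. The bridge between flow and map is the observable
\[
H\colon X\to\R,\qquad H(x)=\int_0^{h(x)}v(\Psi_s x)\,\rmd s .
\]
Since $h\in C^\eta(X)$ (hence $1\le h\le|h|_\infty<\infty$), $|v|_\infty<\infty$ and \eqref{eq:con} holds, $H$ is \emph{bounded}, $|H|_\infty\le|h|_\infty|v|_\infty$, with $\|H\|_{C^\eta}\le C\|v\|_{C^\eta}$. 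Recall that $(M,\Psi_t,\mu_M)$ is measure-theoretically the suspension of $(X,T,\mu_X)$ over $h$, for the appropriate ergodic $T$-invariant probability $\mu_X$ on $X$, with $\mu_M=(\mu_X\times\mathrm{Leb})/\bar h$ and $\bar h=\int_Xh\,\rmd\mu_X<\infty$; in particular $\int_XH\,\rmd\mu_X=\bar h\int_Mv\,\rmd\mu_M=0$, so $H\in C_0^\eta(X)$. Writing $S_nH=\sum_{i=0}^{n-1}H\circ T^i$ and $h_n=\sum_{i=0}^{n-1}h\circ T^i$, a telescoping of the flow integral along the orbit gives, for every point $z=(x,u)$ of the suspension and every $t\ge0$,
\[
\big|v_t(z)-S_{k}H(Tx)\big|\le 2|h|_\infty|v|_\infty,\qquad k=k(z,t):=\#\{n\ge1:h_n(x)\le t+u\},
\]
and, since $\inf h\ge1$, $\;k(z,t)\le t+|h|_\infty$.

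For parts (a) and (b), I would invoke the discrete-time CLT and WIP for the order-$p$ nonuniformly expanding map $T$ applied to $H\in C_0^\eta(X)$: via the Gordin martingale--coboundary decomposition for the induced Gibbs--Markov map $F$ one obtains $n^{-1/2}S_nH\to_d N(0,\sigma_H^2)$ with $\sigma_H^2=\lim_n n^{-1}\int_X(S_nH)^2\,\rmd\mu_X$, and $n^{-1/2}S_{\lfloor n\cdot\rfloor}H\to_w\sigma_H B$ in $D[0,1]$, $B$ a standard Brownian motion (see \cite{MN05,MT04,MT12,KM16,KKM22}). These transfer to the semiflow by a random-time-change argument. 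The lap counter $N(t)=k(\cdot,t)$ satisfies $N(t)/t\to\bar h^{-1}$ $\mu_M$-a.e.\ (Birkhoff's theorem applied to $h$), hence $\sup_{t\in[0,1]}|n^{-1}N(nt)-t\bar h^{-1}|\to0$ a.e.\ by monotonicity; combining this with the telescoping estimate, and using the moment bound of part (c) to absorb the effect of replacing the random index $N(nt)$ by the deterministic approximation $\lfloor nt\bar h^{-1}\rfloor$ (the two differing by $O(n^{1/2})$), one gets $t^{-1/2}v_t\to_d N(0,\sigma^2)$ with $\sigma^2=\sigma_H^2/\bar h$, and $W_n\to_w\sigma_H\bar h^{-1/2}B$ in $C[0,1]$ (note $W_n$ is itself continuous, being $C^1$ in $t$ by \eqref{eq:tim}); the incomplete-lap errors are uniformly $O(n^{-1/2})$, and the hypothesis $p>2$ is what renders the limiting time change deterministic and supplies tightness. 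This is precisely the passage from maps to suspension semiflows of \cite{MT04,MT12}; the value of $\sigma^2$ and its positivity (equivalently, that $H$ is not an $L^2(\mu_X)$-coboundary for $T$) are standard.

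For part (c), the telescoping estimate together with $k(z,t)\le t+|h|_\infty$ gives, for $K\ge1$,
\[
\sup_{t\in[0,K]}|v_t|\;\le\;\Big(\max_{0\le k\le\lceil K+|h|_\infty\rceil}|S_kH|\Big)\circ T\;+\;2|h|_\infty|v|_\infty\qquad\mu_M\text{-a.e.}
\]
Since $h$ is bounded and $\mu_X$ is $T$-invariant, it follows that $\big\|\sup_{t\in[0,K]}|v_t|\big\|_{L^{2(p-1)}(\mu_M)}\le C\big\|\max_{k\le CK}|S_kH|\big\|_{L^{2(p-1)}(\mu_X)}+C\|v\|_{C^\eta}$, and I would conclude by invoking the discrete-time moment bound
\[
\big\|\max_{k\le n}|S_kH|\big\|_{L^{2(p-1)}(\mu_X)}\le Cn^{1/2}\|H\|_{C^\eta},
\]
valid for an order-$p$ nonuniformly expanding map and a bounded H\"older observable (\cite{MN05,MT04,MT12}, see also \cite{KKM22}); with $\|H\|_{C^\eta}\le C\|v\|_{C^\eta}$ this gives the assertion for $K\ge1$, while $0<K<1$ is immediate from $|v_t|\le K|v|_\infty\le K^{1/2}\|v\|_{C^\eta}$.

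\emph{Main obstacle.} The one genuinely non-elementary ingredient is the discrete moment bound with exponent $2(p-1)$, which exceeds $p$ when $p>2$: the crude estimate through the induced observable $\widehat H\in L^p(\mu_Y)$ of $F$ and $r\in L^p$ loses integrability because a single long excursion of $T$ may contribute at the worst-case linear rate. One must instead extract cancellation along excursions through a martingale--coboundary decomposition performed directly on the (non-Gibbs--Markov) map $T$, balancing a martingale term, controlled by Burkholder--Davis--Gundy, against a coboundary whose integrability is governed by the tail $\mu_Y(\varphi>n)=O(n^{-p})$; this is exactly the content of the cited works. Granting that input, the remainder---the telescoping identity, the random-time-change in (a) and (b), and the reduction of the continuous-time maximal function to the discrete one in (c)---is routine bookkeeping.
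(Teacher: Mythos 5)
Your proof is correct and takes essentially the same route as the paper: both define the induced observable $\tilde v = H = \int_0^h v\circ\Psi_s\,\rmd s$ on $X$, note it is Hölder and bounded with $\|\tilde v\|_{C^\eta}\le C\|v\|_{C^\eta}$, invoke the discrete-time CLT/WIP and the $L^{2(p-1)}$ maximal-function bound for the order-$p$ nonuniformly expanding Poincaré map $T$ (e.g.\ \cite[Corollaries 2.10, 2.13]{KKM18}), and then transfer to the semiflow using that $h$ is bounded below (the paper points to \cite[Prop.\ 5.7, Thm.\ 5.8]{KKM22}, \cite[Lemma 4.1]{MT12}, \cite[\S7.2]{KM16} for the inducing and moment-transfer steps, which you spell out as the lap-counting/telescoping and random-time-change argument). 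You correctly isolate the only nontrivial ingredient, the discrete moment bound at exponent $2(p-1)>p$.
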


\begin{proof}
Define the induced observable $\tilde v: X\to \R$,
$\tilde v= \int_0^{h} v\circ \Psi_u\,\rmd u$.
Since $v\in\ C_0^{\eta}(M)$ and $h\in C^\eta(X)$, it follows easily from~\eqref{eq:con} that $\tilde v\in C^{\eta}(X)$ with $\int_X\tilde v\,d\mu=0$.
It is well known, see e.g.\ \cite[Corollary~2.13]{KKM18}, that the WIP holds for $\tilde v:X\to \R$.
By a standard inducing argument (see for example~\cite[Proposition~5.7 and Theorem~5.8]{KKM22}), the WIP holds for $v:M\to\R$.

Similarly, by e.g.\ \cite[Corollary~2.10]{KKM18},
\(
\big\| \max_{k\le n} | \sum_{j=0}^{k-1}\tilde v\circ T^j | \big\|_{L^{2(p-1)}(X)} \le C\|v\|_{C^\eta} n^{1/2}.
\)
Since the roof function $h$ is bounded below, the moment estimate for the semiflow follows easily by a standard argument
(see for example,~\cite[proof of Lemma~4.1]{MT12} or~\cite[Section~7.2]{KM16}).
\end{proof}

In the present paper, we consider the Wasserstein distance to metrize weak convergence. For $q\ge 1$, we denote by $\cW_q(\mu,\nu)$ the Wasserstein distance between the distributions $\mu$ and $\nu$ on a Polish space $(\cX,d)$ (see \cite[Definition~6.1]{V09}):
\[
\cW_q(\mu,\nu)= \inf \big\{ [\E\, {d(X,Y)}^q]^{1/q};\, \hbox{law} (X)=\mu,\, \hbox{law} (Y)=\nu \big\}.
\]
\begin{prop}[{{\cite[Definition 6.8]{V09}}}]\label{prop:lid}
We have that $\lim_{n\to\infty}\cW_q(\mu_n, \mu)=0$ if and only if the following two conditions hold:
\begin{enumerate}
  \item $\mu_n\to_{w}\mu$ as $n\to\infty$;
  \item $\lim_{n\to\infty}\int_{\cX} d(x,x_{0})^q\,\rmd\mu_n(x)= \int_{\cX} d(x,x_{0})^q\,\rmd\mu(x)$ for some $($thus any$)$ $x_{0}\in\cX$.
\end{enumerate}
 In particular, if the metric $d$ is bounded, then the convergence with respect to $\cW_q$ is equivalent to the weak convergence in condition (1).
\end{prop}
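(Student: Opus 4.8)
The plan is to prove the two implications separately. The forward implication is elementary, so I would dispatch it first; the reverse implication is the substantive one, and I would base it on the Skorokhod representation theorem together with a uniform-integrability argument.

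For the forward direction, suppose $\cW_q(\mu_n,\mu)\to0$. I would choose, for each $n$, random variables $X_n\sim\mu_n$ and $Y\sim\mu$ on a common probability space with $[\E\,d(X_n,Y)^q]^{1/q}\le\cW_q(\mu_n,\mu)+n^{-1}$. Then $d(X_n,Y)\to0$ in $L^q$, hence in probability, so $X_n\to_d Y$ and condition (1) holds. For condition (2) I would combine the reverse triangle inequality $|d(X_n,x_0)-d(Y,x_0)|\le d(X_n,Y)$ with Minkowski's inequality to get $\big|\,\|d(X_n,x_0)\|_{L^q}-\|d(Y,x_0)\|_{L^q}\,\big|\le\|d(X_n,Y)\|_{L^q}\to0$; since $\|d(X_n,x_0)\|_{L^q}^q=\int_{\cX}d(x,x_0)^q\,\rmd\mu_n$ and likewise for $\mu$, condition (2) follows for the given $x_0$, and in fact for every $x_0$.

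For the reverse direction, assume (1) and (2). Note that (2) forces $m:=\int_{\cX}d(x,x_0)^q\,\rmd\mu<\infty$; put $m_n:=\int_{\cX}d(x,x_0)^q\,\rmd\mu_n$, so $m_n\to m$. Since $\cX$ is Polish and $\mu_n\to_w\mu$, the Skorokhod representation theorem supplies $X_n\sim\mu_n$ and $X\sim\mu$ on a common probability space with $X_n\to X$ almost surely, hence $d(X_n,X)^q\to0$ a.s. The point is to upgrade this to $L^1$-convergence. By convexity of $t\mapsto t^q$ (here $q\ge1$),
\[
d(X_n,X)^q\le 2^{q-1}\bigl(d(X_n,x_0)^q+d(X,x_0)^q\bigr)=:G_n,
\]
with $G_n\to 2^q d(X,x_0)^q$ a.s.\ and $\E\,G_n=2^{q-1}(m_n+m)\to 2^q m=\E\bigl[2^q d(X,x_0)^q\bigr]<\infty$. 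The generalized dominated convergence theorem (Pratt's lemma) then yields $\E\,d(X_n,X)^q\to0$, and since $(X_n,X)$ is a coupling of $\mu_n$ and $\mu$ we conclude $\cW_q(\mu_n,\mu)\le[\E\,d(X_n,X)^q]^{1/q}\to0$. For the final assertion: when $d$ is bounded, $x\mapsto d(x,x_0)^q$ is a bounded continuous function, so (1) immediately implies (2), and $\cW_q$-convergence collapses to weak convergence.

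I expect the only real obstacle to be this last upgrade from almost sure to $L^q$ convergence of $d(X_n,X)^q$: weak convergence alone is insufficient, and one must check that condition (2) is precisely what restores the uniform integrability needed to invoke Pratt's lemma — equivalently, that the tails of $d(\cdot,x_0)^q$ under $\mu_n$ are controlled uniformly in $n$. Everything else is a routine manipulation of couplings and of the triangle inequality.
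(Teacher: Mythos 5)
The paper does not prove this proposition; it simply cites it as a standard fact from Villani (where it appears as Theorem~6.9, the characterisation accompanying Definition~6.8), so there is no in-paper proof against which to compare. Your argument is correct and is essentially the textbook proof: for the forward direction, near-optimal couplings plus Minkowski give both weak convergence and convergence of the $q$-th moments; for the reverse direction, Skorokhod representation (valid since $\cX$ is Polish) gives almost-sure convergence, and Pratt's lemma with the dominating sequence $G_n=2^{q-1}(d(X_n,x_0)^q+d(X,x_0)^q)$ upgrades this to $L^q$-convergence of $d(X_n,X)$. Two small points worth being explicit about, although they do not affect correctness: in the forward step the couplings $(X_n,Y)$ live on probability spaces that may vary with $n$, so to conclude $\mu_n\to_w\mu$ one should pass through bounded Lipschitz test functions rather than invoking ``convergence in probability implies convergence in distribution'' literally; and the whole statement tacitly assumes $\mu,\mu_n$ have finite $q$-th moments, which is required for $\cW_q$ to be finite. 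Both are minor and standard.
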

In the following, we use the abbreviation $\cW_q(X,Y)$ to mean $\cW_q(\P_X, \P_Y)$.
\begin{thm}\label{thm:exp1}
Let $\Psi_t:M\to M$ be a nonuniformly expanding semiflow of order $p>2$ and $v\in C_0^{\eta}(M)$. Then $\lim_{n\to\infty}\cW_q(W_n,W)=0$ for all $1\le q< 2(p-1)$.
\end{thm}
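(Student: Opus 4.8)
The plan is to verify the two conditions of Proposition~\ref{prop:lid} for the sequence of laws $\P_{W_n}$ on the Polish space $(C[0,1],d_C)$, with limit law $\P_W$. Condition (1), that $\P_{W_n}\to_w\P_W$, is precisely the WIP of Proposition~\ref{prop:summ}(b). For condition (2) we choose the base point $x_0$ to be the zero function in $C[0,1]$, so that $\|x\|_\infty:=d_C(x,0)=\sup_{t\in[0,1]}|x(t)|$, and the task reduces to showing
\[
\E\big[\|W_n\|_\infty^q\big]\longrightarrow\E\big[\|W\|_\infty^q\big]\qquad(n\to\infty).
\]

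The map $x\mapsto\|x\|_\infty$ is $1$-Lipschitz on $(C[0,1],d_C)$, so by the continuous mapping theorem the WIP gives $\|W_n\|_\infty\to_d\|W\|_\infty$, hence $\|W_n\|_\infty^q\to_d\|W\|_\infty^q$. By the standard fact that convergence in distribution together with uniform integrability implies convergence of expectations, it therefore suffices to prove that $\{\|W_n\|_\infty^q\}_{n\ge1}$ is uniformly integrable.

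This is where the moment bound of Proposition~\ref{prop:summ}(c) is used. Since $W_n(t)=n^{-1/2}v_{nt}$, we have $\|W_n\|_\infty=n^{-1/2}\sup_{s\in[0,n]}|v_s|$, so applying Proposition~\ref{prop:summ}(c) with $K=n$ yields
\[
\big\|\,\|W_n\|_\infty\,\big\|_{L^{2(p-1)}(M)}=n^{-1/2}\Big\|\sup_{s\in[0,n]}|v_s|\Big\|_{L^{2(p-1)}(M)}\le C\|v\|_{C^\eta}
\]
uniformly in $n$. Since $q<2(p-1)$, the exponent $r:=2(p-1)/q$ is strictly greater than $1$, so $\{\|W_n\|_\infty^q\}_{n\ge1}$ is bounded in $L^r(M)$ with $r>1$ and is therefore uniformly integrable. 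Combined with the convergence in distribution above, this gives $\E[\|W_n\|_\infty^q]\to\E[\|W\|_\infty^q]$, the limit being finite (e.g.\ by Fatou's lemma applied to the uniform $L^{2(p-1)}$ bound). Hence condition (2) of Proposition~\ref{prop:lid} holds, and we conclude $\cW_q(W_n,W)\to0$.

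The argument is essentially routine once Propositions~\ref{prop:summ} and~\ref{prop:lid} are in hand; the only point needing care is the uniform integrability step, which is exactly where the hypothesis $q<2(p-1)$ is essential — at the endpoint $q=2(p-1)$ one would obtain only an $L^1$ bound on $\|W_n\|_\infty^q$, which does not force uniform integrability. This (mild) point is the main obstacle.
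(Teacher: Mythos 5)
Your proof is correct and follows essentially the same route as the paper: the paper also verifies the two conditions of Proposition~\ref{prop:lid} with $x_0=0$, using the WIP for condition~(1) and the $L^{2(p-1)}$ moment bound of Proposition~\ref{prop:summ}(c) for condition~(2). The only difference is that you unpack the step ``convergence in distribution plus a uniform $L^{2(p-1)}$ bound implies convergence of $q$-th moments for $q<2(p-1)$'' by spelling out the uniform-integrability argument, whereas the paper cites this directly as \cite[Theorem 4.5.2]{Chung}.
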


\begin{proof}
The proof is essentially the same as that of \cite[Theorem~3.3]{LW24a}, and is included here for completeness. It follows from Proposition~\ref{prop:summ}$(c)$ that $W_n$ has a finite moment of order $2(p-1)$. This together with the fact that $W_n\to_{w} W$ in Proposition~\ref{prop:summ}$(b)$ implies that for each $q<2(p-1)$,
\[
\lim_{n \to \infty}\E\sup_{t\in [0,1]}|W_n(t)|^q=\E\sup_{t\in [0,1]}|W(t)|^q
\]
by \cite[Theorem 4.5.2]{Chung}. On the other hand, using the fact that $W_n: M\to C[0,1]$ and the definition of pushforward measures, we have
\[
\int_{C[0,1]} d_C(x,0)^q \, \rmd \mu_M\circ W_n^{-1}(x)=\int_{M}  \sup_{t\in [0,1]}|W_n(t,\omega)|^q \,\rmd \mu_M(\omega)=\E\sup_{t\in [0,1]}|W_n(t)|^q;
\]
similarly,
\[
\int_{C[0,1]} d_C(x,0)^q \, \rmd \mu_M\circ W^{-1}(x)=\E\sup_{t\in [0,1]}|W(t)|^q.
\]
Hence
\[
\lim_{n \to \infty}\int_{C[0,1]}d_C(x,0)^q \, \rmd \mu_M\circ W_n^{-1}(x)=\int_{C[0,1]}d_C(x,0)^q \, \rmd \mu_M\circ W^{-1}(x).
\]
By taking $\mu_n=\mu_M\circ W_n^{-1}, \mu=\mu_M\circ W^{-1}$ and $x_0=0$ in Proposition~\ref{prop:lid} and using the fact that $W_n\to_{w} W$ in $C[0,1]$, the result follows.
\end{proof}

Our main result for semiflows is the following:
\begin{thm}\label{thm:exp2}
Let $\Psi_t:M\to M$ be a nonuniformly expanding semiflow of order $p>2$ and $v\in C_0^{\eta}(M)$.
Then there is a constant $C>0$ such that
\begin{align*}
  \cW_{\frac{p}{2}}(W_n,W)\le
  \begin{cases}
Cn^{-\frac12+\frac{1}{p}}(\log n)^\frac12, & 2<p<4,\\
  Cn^{-\frac14}(\log n)^\frac12,  & p\ge4.
  \end{cases}
\end{align*}
\end{thm}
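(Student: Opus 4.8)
The plan is to reduce the continuous-time estimate to a discrete-time one via the martingale–coboundary machinery summarized in Section~\ref{sec:MCD}, and then to quantify the martingale approximation using a Komlós–Major–Tusnády (KMT)–type strong approximation followed by a Skorokhod-embedding comparison with Brownian motion. First I would use the secondary martingale–coboundary decomposition for the nonuniformly expanding semiflow (the continuous-time analogue, from \cite{P24}, of \cite{KKM18}): write $v_t = M_t + \chi\circ\Psi_t - \chi + (\text{error})$, where $M_t$ is (close to) a continuous-time martingale with stationary increments and $\chi$ is a bounded, or $L^{q}$-controlled, coboundary term. The key point is that $\|\chi\|$ and the error have moments of order close to $2(p-1)$, and hence $\cW_{p/2}$-contributions of size $O(n^{-1/2+1/p})$ or better once divided by $\sqrt n$ and maximized over $[0,n]$ using Proposition~\ref{prop:summ}$(c)$ and Doob/Rosenthal inequalities.

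Next I would handle the martingale part. After the coboundary is stripped off, $W_n$ differs from $n^{-1/2}M_{nt}$ by a term whose $q$-Wasserstein cost is controlled as above. For the martingale, the strategy is the one used by Liu–Wang~\cite{LW24a} in discrete time, transplanted to the flow: embed the martingale (or its discretization at integer times, controlling the in-between fluctuation by the roof function being bounded below and by~\eqref{eq:tim}) into a Brownian motion via the Skorokhod/Strassen representation, so that $M_{k} = B_{\tau_k}$ with $\tau_k$ a sum of conditionally centered, $L^{p/2}$-bounded increments of the quadratic variation. Then $\cW_{p/2}(W_n,W)$ is bounded by $n^{-1/2}\big\|\sup_{k\le n}|B_{\tau_k}-B_{\sigma^2 k}|\big\|_{L^{p/2}}$ plus the earlier error terms, and a standard estimate for Brownian oscillations over a time window of size $\max_{k\le n}|\tau_k-\sigma^2k|$ gives, via the maximal inequality for $\tau_k-\sigma^2 k$ and Hölder modulus-of-continuity bounds for $B$, a contribution of order $n^{-1/2}(n^{1/2}\log n)^{1/2}=n^{-1/4}(\log n)^{1/2}$ when the relevant moments are available, i.e.\ when $p\ge 4$; for $2<p<4$ the weaker moment forces the rate $n^{-1/2+1/p}(\log n)^{1/2}$. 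Throughout I would use that the metric on $C[0,1]$ is $d_C$, so all comparisons are in sup-norm, and that $W_n$ is genuinely a function of $\omega\in M$, so the coupling can be realized on a single probability space as in the proof of Theorem~\ref{thm:exp1}.

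The main obstacle I expect is twofold. First, converting the continuous-time martingale approximation into a clean discrete embedding: one must pass from $M_t$ to $M_{\lfloor t\rfloor}$ (or to the induced, discrete martingale at return times) and show the interpolation error is negligible in $\cW_{p/2}$ — this uses $\inf h\ge 1$, the Lipschitz-in-time bound~\eqref{eq:tim}, and moment bounds for the number of returns up to time $n$, and it is where the nonuniformity parameter $p$ really enters. Second, the Skorokhod-embedding step needs a maximal inequality for $\max_{k\le n}|\tau_k - \sigma^2 k|$ of the right order $n^{1/2}(\log n)^{1/\text{something}}$ under only an $L^{p/2}$ bound on the quadratic-variation increments; getting the logarithmic factor to be exactly $(\log n)^{1/2}$ and not worse — and verifying the increments are sufficiently close to a stationary, conditionally centered sequence despite the approximate (not exact) martingale structure coming from the semiflow decomposition — is the delicate part. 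Once these are in place, the two cases in the statement follow by bookkeeping: in the regime $p\ge 4$ the martingale/Brownian comparison dominates and yields $n^{-1/4}(\log n)^{1/2}$, while for $2<p<4$ the coboundary and approximation errors of size $n^{-1/2+1/p}(\log n)^{1/2}$ dominate.
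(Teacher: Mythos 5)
Your overall strategy---lift to the Gibbs--Markov semiflow $Y^\varphi$, apply the martingale--coboundary decomposition of Proposition~\ref{prop:MCD}, then compare the martingale part to Brownian motion via Skorokhod embedding and a H\"older modulus estimate---is exactly the route the paper takes (and the paper takes it from~\cite{LW24a,AM19}), so the plan is fundamentally sound. However, there is one concrete step in your sketch that, as written, does not go through, and one minor bookkeeping misattribution.

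The gap is the direction of the martingale. You write the decomposition as $v_t = M_t + \chi\circ\Psi_t - \chi + (\text{error})$ with ``$M_t$ (close to) a continuous-time martingale,'' and you then propose to Skorokhod-embed the discretization $M_k = \sum_{j<k} m\circ F_j$. But Proposition~\ref{prop:MCD} produces $m\in\ker L_1$, which means $\E(m\mid F_1^{-1}\cB)=U_1L_1m=0$; consequently $\{m\circ F_j\}_{j\ge0}$ is a \emph{reverse} martingale difference sequence with respect to the decreasing filtration $\{F_j^{-1}\cB\}$, not a forward one. Skorokhod's representation theorem requires a forward martingale, so embedding $\sum_{j=0}^{k-1} m\circ F_j$ directly is not available. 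The paper (and~\cite{LW24a,AM19}) resolves this by reversing time: one defines $\zeta_{n,j} = n^{-1/2}\sigma^{-1}\,m\circ F_{n-j}$ with increasing filtration $\cG_{n,j}=F_{n-j}^{-1}\cB$ (Proposition~\ref{prop:MDS}), builds the linearly interpolated process $X_n$ from these (equation~\eqref{eq:Xn}), embeds $X_n$ into Brownian motion (Lemma~\ref{lem:xnw}), and then relates $X_n$ not to $\tW_n$ but to $g\circ\tW_n$ where $g(u)(t)=u(1)-u(1-t)$ (Lemma~\ref{lem:WnXn}). Since $g$ is a Lipschitz involution with $g(W)=_d W$, the estimate transfers back to $\cW_{p/2}(\tW_n,W)$ at the cost of a constant factor. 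Your proposal omits this entirely; without the time-reversal and the map $g$, the embedding step fails.

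Two smaller points. First, the invocation of a ``KMT-type strong approximation'' is a red herring: the proof uses only the Skorokhod embedding of~\cite{HH80}, not KMT. Second, your explanation of the case split is not quite right: the $n^{-1/2+1/p}$ rate for $2<p<4$ does not come from ``coboundary and approximation errors'' (the coboundary contribution in Lemma~\ref{lem:WnXn} is $O(n^{-1/2+1/p})$ in both regimes). It comes from the Burkholder exponent for the $L^{p/2}$-martingale $T_k - V_k$ in step~(2) of Lemma~\ref{lem:xnw}: for $p/2<2$ Burkholder gives $n^{2/p}$ in place of $n^{1/2}$, and after taking a square root via the weight $\omega_{1/2}$ this yields $n^{-1/2+1/p}(\log n)^{1/2}$ versus $n^{-1/4}(\log n)^{1/2}$ for $p\ge4$.
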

We postpone the proof of Theorem~\ref{thm:exp2} to Section~\ref{sec:exp2}.

\begin{rem} \label{rem:cf}
The rates in Theorem~\ref{thm:exp2} for semiflows are significantly stronger than those obtained in~\cite{LW24a} for maps due to a refinement\footnote{This refinement was suggested to us by Nicholas Fleming-V\'azquez.} of the strategy in Section~\ref{sec:exp2} below. It is easy to check that the same refinement leads to the improved rates also for maps.
\end{rem}

\begin{exam}[Intermittent semiflows]\label{ex:lsv}
We consider a class of intermittent semiflow $\Psi_t:M\to M$ with an intermittent Poincar\'e map, namely the LSV map $T:[0,1]\to [0,1]$ (see~\cite{LSV99}),
\begin{align*}
  T(x)=
  \begin{cases}
  x(1+2^\beta x^\beta) & x\in[0,\frac{1}{2}),\\
  2x-1  & x\in[\frac{1}{2},1],
  \end{cases}
\end{align*}
and H\"older return time function $h:[0,1]\to [1,\infty)$.
Here $\beta> 0$ is a parameter and the map $T$ and semiflow $\Psi_t$ are nonuniformly expanding of order $p$ for all $p<1/\beta$.

Paviato \cite{P24} derived the $1$-Wasserstein convergence rate $n^{-\frac{1}{6}+\delta}$
for $\beta\in (0,\frac{1}{3})$ and $n^{-\frac{1}{2}+\beta+\delta}$ for $\beta\in [\frac{1}{3},\frac{1}{2})$.
By Theorem~\ref{thm:exp2}, for $q< \frac{1}{2\beta}$, we obtain the $q$-Wasserstein convergence rate $n^{-\frac14}(\log n)^\frac12$
for $\beta\in(0,\frac{1}{4})$ and $n^{-\frac12+\beta+\delta}$ for $\beta\in [\frac{1}{4},\frac{1}{2})$, which improves Paviato's result in the range $\beta\in(0,\frac{1}{3})$.
\end{exam}

\section{Martingale-coboundary decompositions for semiflows}
\label{sec:MCD}

\setcounter{equation}{0}
In this section, we summarize some results for the suspension semiflow $F_t:Y^\varphi\to Y^\varphi$ defined in Section~\ref{sec:main}.
In Subsection~\ref{sec:GM}, we recall the notion of Gibbs-Markov semiflow.
In Subsection~\ref{sec:GMMCD}, we recall the martingale-coboundary decomposition for such semiflows from \cite{P24}, which extended the approach of \cite{KKM18} to continuous-time systems.

\subsection{Gibbs-Markov semiflows}
\label{sec:GM}

Let $F_t:Y^\varphi\to Y^\varphi$ be the suspension semiflow over the induced Gibbs-Markov map $F:Y\to Y$ with induced roof function $\varphi\in L^p(Y)$
and ergodic invariant probability measure $\mu^\varphi$
as in Subsection~\ref{sec:NUEsemi}.
By \cite[Proposition~4.1]{P24}, there is $C>0$ such that
\[
|\varphi(y)-\varphi(y')|\le C(\infYj\varphi)d(Fy,Fy')^\eta
\quad\text{for all $y,y'\in Y_j$, $j\ge1$}.
\]
Following~\cite{BBM19,P24}, we call $F_t:Y^{\varphi}\to Y^{\varphi}$ a {\em Gibbs-Markov semiflow of order $p$}.

For $j\ge 1$, set $Y_j^{\varphi}=\{(y,u)\in Y^{\varphi}:y\in Y_j\}$.
Given $w:Y^{\varphi}\to \R$ and $\eta\in (0,1]$, we define
\[|w|_\infty=\sup_{(y,u)\in Y^{\varphi}}|w(y,u)|, \quad
|w|_{\cV^{\eta}}=\sup_{j\ge 1}\sup_{(x,u),(y,u)\in Y_{j}^{\varphi}\atop x\neq y}
\frac{|w(x,u)-w(y,u)|}{(\infYj\varphi)d(Fx,Fy)^{\eta}}.
\]
We write $w\in \cV^{\eta}(Y^{\varphi})$ if $\|w\|_{\cV^{\eta}}=|w|_\infty+|w|_{\cV^{\eta}}<\infty$.
Let $\cV_0^{\eta}(Y^{\varphi})=\{w\in \cV^{\eta}(Y^{\varphi}): \int_{Y^{\varphi}} w \,\rmd \mu^\varphi=0\}$.

\subsection{Martingale-coboundary decompositions for Gibbs-Markov semiflows}
\label{sec:GMMCD}

Suppose that $F_t:Y^\varphi\to Y^\varphi$ is a Gibbs-Markov semiflow of order $p\ge2$.
Let $U_1w=w\circ F_1$ be the Koopman operator for the time-one map $F_1$ and $L_1$ the transfer operator of $F_1$, so
$\int L_1v\,w\,\rmd \mu^\varphi=\int v\,U_1w\,\rmd \mu^\varphi$ for
$v\in L^1(Y^{\varphi})$, $w\in L^\infty(Y^{\varphi})$.

\begin{prop} \label{prop:MCD}
Given $w\in \cV_0^\eta(Y^{\varphi})$, define a function $\psi:Y^{\varphi}\to \R$, $\psi=\int_0^1 w\circ F_s \,\rmd s$. Then
there exist $m\in L^p(Y^{\varphi})$, $\chi\in L^{p-1}(Y^{\varphi})$ such that $\psi=m+\chi\circ F_1-\chi$ and
$m\in \ker L_1$.

Moreover, there exists a constant $C>0$ such that for all $w\in \cV_0^\eta(Y^{\varphi})$,
\[
\|m\|_{L^p}\le C\|w\|_{\cV^\eta}, \quad \big\|\max_{1\le k\le n}|\chi\circ F_k-\chi|\,\big\|_{L^p}\le C\|w\|_{\cV^\eta}n^{1/p}.
\]
\end{prop}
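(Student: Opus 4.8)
The plan is to carry out Gordin's martingale--coboundary construction directly for the time-one map $F_1$. Formally one puts
\[
\chi=\sum_{n\ge1}L_1^n\psi,\qquad m=\psi+\chi-\chi\circ F_1 .
\]
Since $L_1U_1=\mathrm{Id}$ one has $L_1\chi=\chi-L_1\psi$, hence $L_1m=L_1\psi+L_1\chi-\chi=L_1\psi+(\chi-L_1\psi)-\chi=0$, so $m\in\ker L_1$ and $\psi=m+\chi\circ F_1-\chi$, as required. Everything therefore reduces to controlling the iterates $L_1^n\psi$ in suitable norms: summability of $\|L_1^n\psi\|$ in $L^{p-1}$ will give $\chi\in L^{p-1}$ with the stated bound, a slightly sharper bound on the coboundary part $\chi\circ F_1-\chi$ will promote $m$ to $L^p$, and the maximal inequality will follow from the same estimates by a telescoping/union-bound argument.

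The crux, and the only place the continuous-time setting genuinely differs from the discrete one, is the estimate on $\|L_1^n\psi\|$. The difficulty is that $F_1$ is the time-one map of a suspension semiflow, so $L_1$ has no spectral gap and $\sum_n\|L_1^n w\|$ diverges for a general $w\in\cV_0^\eta(Y^\varphi)$: this is the continuous-spectrum obstruction. It is resolved by the fact that $\psi=\int_0^1 w\circ F_s\,\rmd s$ is a \emph{flow average}; such a function is regular along the neutral (flow) direction, so that translating it along the flow produces only bounded, lower-order corrections, and the sole genuinely contracting mechanism seen by $L_1$ is that of the underlying Gibbs--Markov base $F:Y\to Y$. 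Concretely, I would split $Y^\varphi$ according to the size of the local return time: on the region $\{\varphi\le N\}$ one extracts geometric decay of $L_1^n\psi$ from the spectral gap of $F$ on $\cV^\eta$ (available via conditions~(1)--(4) of Subsection~\ref{sec:NUE} together with the H\"older bound $|\varphi(y)-\varphi(y')|\le C(\inf_{Y_j}\varphi)d(Fy,Fy')^\eta$ of \cite[Prop.~4.1]{P24}), exactly as in the discrete-time analysis of \cite{KKM18}; the region $\{\varphi>N\}$ is then controlled purely by the tail $\mu_Y(\varphi>N)$. It is precisely here that the order-$p$ hypothesis $\varphi\in L^p(Y)$ enters, and it is what produces the $L^p$ integrability of $m$, the $L^{p-1}$ integrability of $\chi$, and the exponent $1/p$ in the maximal bound, with no margin to spare.

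Granting these transfer-operator estimates, the bounds $\|\chi\|_{L^{p-1}}\le C\|w\|_{\cV^\eta}$ and $\|m\|_{L^p}\le C\|w\|_{\cV^\eta}$ follow by summation (the gain of one degree of integrability for $m$ over $\chi$ coming from the extra cancellation in $\psi+\chi-\chi\circ F_1$), and the maximal inequality $\|\max_{1\le k\le n}|\chi\circ F_k-\chi|\|_{L^p}\le C\|w\|_{\cV^\eta}n^{1/p}$ follows from the estimates on $\|L_1^n\psi\|$ by the same argument as in the discrete-time theory of \cite{KKM18}, the exponent $n^{1/p}$ being once more the signature of the $\varphi\in L^p$ tail. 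I expect the genuinely hard part to be the first of these steps: making precise the sense in which the flow average $\psi$ lets the spectral gap of the Gibbs--Markov base substitute for the (non-existent) spectral gap of $L_1$, while keeping the exponents sharp. An alternative is to first run the discrete-time decomposition of \cite{KKM18} for the induced observable $\bar w=\int_0^\varphi w\circ F_s\,\rmd s$ of the base map $F$ and then suspend it to a decomposition along the flow, bridging flow time $\varphi_n$ and flow time $n$ by $O(\varphi)$-sized correction terms; this route leads to the same conclusion and uses the moment hypothesis in the same way.
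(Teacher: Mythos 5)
The paper's ``proof'' is a citation to Propositions~4.6, 4.8 and 4.9 of Paviato \cite{P24}, which follow the inducing scheme of \cite{KKM18}: the Gordin decomposition is carried out for the base Gibbs--Markov map $F:Y\to Y$ (where a genuine spectral gap is available), and the resulting pair $(\tilde m,\tilde\chi)$ is lifted to the suspension $Y^\varphi$ by an explicit tower-type construction; the $L^p$ and $L^{p-1}$ integrabilities, and the $n^{1/p}$ maximal bound, come from the $L^p$ tail of $\varphi$.

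Your primary route --- setting $\chi=\sum_{n\ge1}L_1^n\psi$ directly for the time-one map --- has a genuine gap precisely where you flag the difficulty. You correctly note that $L_1$ has no spectral gap, but the proposed remedy (restrict to $\{\varphi\le N\}$ and invoke the spectral gap of $F$) does not deliver summable decay of $\|L_1^n\psi\|$. The time-one map entangles the base and fibre coordinates, so $L_1$ does not inherit the spectral gap of $L_F$ on any convenient function space; even for uniformly hyperbolic suspensions with bounded roof, decay of $L_1^n$ on H\"older flow-averages requires Dolgopyat-type oscillatory cancellation and is not unconditional (indeed it fails for non-mixing suspensions, where the decomposition $\psi=m+\chi\circ F_1-\chi$ nonetheless exists). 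In other words, the decomposition is \emph{not} obtained by summing the formal series: the object that \cite{P24,KKM18} actually sum is $L_F^n$ applied to the induced observable $\tilde\psi$ on $Y$, where geometric decay is free of charge, and $\chi$ on $Y^\varphi$ is then assembled from $\tilde\chi$ together with partial integrals of $w$ up the fibre. The cutoff by $\{\varphi\le N\}$ and the $L^p$ tail enter only at that assembly stage, in bounding the fibre-integral contribution, not in establishing decay of $L_1^n$.

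Your ``alternative'' in the final sentences --- decompose $\tilde\psi$ via \cite{KKM18} on the base and then suspend, absorbing the discrepancy between Birkhoff time along $F$ and flow time $n$ into an $O(\varphi)$ correction --- is essentially the route of \cite{P24} and would, if carried out, give the correct estimates. The algebra you set up at the start (showing $L_1m=0$ once $\chi$ solves the coboundary equation) is of course fine, but should be applied to the $\chi$ built by inducing, not to the divergent series. I'd recommend promoting the alternative to the main line and dropping the direct time-one Gordin summation.
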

\begin{proof}
This proposition is a summary of parts of~\cite[Propositions~4.6, 4.8 and 4.9]{P24}.
\end{proof}

For $1\le j\le n$, define the $\sigma$-algebra $\cG_{n,j}=F_{n-j}^{-1}\cB$.
where $\cB$ is the underlying $\sigma$-algebra on the space $Y^{\varphi}$.
\begin{prop} \label{prop:MDS}
Fix $n\ge 1$.
Then $\{m\circ F_{n-j}, \cG_{n,j}; 1\le j\le n\}$ is a sequence of martingale differences.
That is $\cG_{n,j-1}\subset \cG_{n,j}$,
$m\circ F_{n-j}$ is $\cG_{n,j}$-measurable,
and $\E(m\circ F_{n-j}|\cG_{n,j-1})=0$
for $0\le j\le n-1$.
\end{prop}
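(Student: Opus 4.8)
The plan is to verify the three required properties of a martingale difference sequence directly from the structure $\cG_{n,j}=F_{n-j}^{-1}\cB$ and the fact that $m\in\ker L_1$, where $L_1$ is the transfer operator of $F_1$.

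First I would establish the filtration property $\cG_{n,j-1}\subset\cG_{n,j}$. Since $\cG_{n,j-1}=F_{n-j+1}^{-1}\cB=F_{n-j}^{-1}(F_1^{-1}\cB)$ and $F_1^{-1}\cB\subset\cB$, we get $F_{n-j}^{-1}(F_1^{-1}\cB)\subset F_{n-j}^{-1}\cB=\cG_{n,j}$, as desired. The measurability of $m\circ F_{n-j}$ with respect to $\cG_{n,j}$ is immediate: $m\circ F_{n-j}=m\circ F_{n-j}$ is by definition $F_{n-j}^{-1}\cB$-measurable since $m$ is $\cB$-measurable.

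The substantive point is the conditional expectation identity $\E(m\circ F_{n-j}\mid\cG_{n,j-1})=0$. The key fact is the duality between conditioning and the transfer operator: for $F_t$-invariant $\mu^\varphi$ and the time-one map $F_1$, one has the standard identity $\E_{\mu^\varphi}(g\mid F_1^{-1}\cB)=(L_1 g)\circ F_1$ for $g\in L^1$. Applying this with $g=m$ gives $\E(m\mid F_1^{-1}\cB)=(L_1 m)\circ F_1=0$ since $m\in\ker L_1$. Then, composing with $F_{n-j}$ and using that $F_{n-j}$ is measure preserving together with $\cG_{n,j-1}=F_{n-j}^{-1}(F_1^{-1}\cB)$, one obtains $\E(m\circ F_{n-j}\mid\cG_{n,j-1})=\big(\E(m\mid F_1^{-1}\cB)\big)\circ F_{n-j}=0$. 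I would spell out the change-of-variables step carefully: for any $A\in\cG_{n,j-1}$, write $A=F_{n-j}^{-1}B$ with $B\in F_1^{-1}\cB$, and compute $\int_A m\circ F_{n-j}\,\rmd\mu^\varphi=\int_B m\,\rmd\mu^\varphi=\int_B \E(m\mid F_1^{-1}\cB)\,\rmd\mu^\varphi=0$.

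The main obstacle — really the only place where care is needed — is the justification of the conditioning-versus-transfer-operator identity $\E(g\mid F_1^{-1}\cB)=(L_1 g)\circ F_1$ in this possibly non-invertible suspension-semiflow setting, and the verification that $m\circ F_{n-j}\in L^1$ so that all conditional expectations are well defined (this follows from $m\in L^p(Y^\varphi)$ with $p\ge2$ in Proposition~\ref{prop:MCD} and the fact that $F_{n-j}$ preserves $\mu^\varphi$). Both are standard, so the proof should be short; I would cite the analogous discrete-time statement from \cite{KKM18} and simply note that it transfers verbatim to the time-one map $F_1$ of the semiflow.
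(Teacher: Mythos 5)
Your proposal is correct and follows essentially the same route as the paper, which simply cites the three ingredients $F_1^{-1}\cB\subset\cB$, $U_1L_1=\E(\cdot\mid F_1^{-1}\cB)$, and $m\in\ker L_1$, deferring to~\cite[Proposition~2.9]{KKM18} for the standard argument. You have just written out the verification in full detail — the filtration inclusion via $\cG_{n,j-1}=F_{n-j}^{-1}(F_1^{-1}\cB)$, and the conditional expectation computation pushed through $F_{n-j}$ by measure invariance — which matches the cited reference.
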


\begin{proof}
This is a standard consequence of the facts that
$F_1^{-1}\cB\subset \cB$,
$U_1L_1= \E(\cdot|F_1^{-1}\cB)$ and $m\in\ker L_1$
(see for example~\cite[Proposition~2.9]{KKM18}).
\end{proof}

Following~\cite[Equation~(4.15)]{P24},
define  $\breve w=(U_1L_1m^2)-\sigma^2= \E(m^2-\sigma^2|F_1^{-1}\cB)$.
Since $\int_{Y^{\varphi}} U_1L_1m^2 \,\rmd \mu^{\varphi}=\int_{Y^{\varphi}} L_1m^2 \,\rmd \mu^{\varphi}=\sigma^2$,
we have $\int_{Y^{\varphi}} \breve w \,\rmd \mu^{\varphi}=0$.

Define for $q>0$,
\[
\omega_q:[0,\infty)\to[0,\infty), \qquad \omega_q(t)=(t\ell(t))^q, \qquad
\ell(t)=\begin{cases} -\log t, & 0<t\le \tfrac13 \\ \quad\log 3, & t\ge \tfrac13 \end{cases}.
\]
Set $\omega_q(0)=0$.
Then $\ell$ and $\omega_q$ are continuous functions, with $\ell$ decreasing and $\omega_q$ increasing.
We now list some elementary properties of $\omega_q$.

\begin{prop} \label{prop:omega}
\begin{itemize}
\item[(a)]
$\omega_1(s+t)\le \omega_1(s)+\omega_1(t)$ for $s,t\ge0$.
\item[(b)]
$\|\,\omega_1(|Z|)\|_{L^r}\le 2\omega_1( \|Z\|_{L^r})$
for all $Z\in L^r$, $r\ge1$.
\end{itemize}
\end{prop}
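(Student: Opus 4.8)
The final statement is Proposition~\ref{prop:omega}, concerning the elementary function $\omega_1(t) = t\,\ell(t)$ where $\ell(t) = -\log t$ on $(0,1/3]$ and $\ell(t) = \log 3$ for $t \ge 1/3$. I want to prove two facts: subadditivity of $\omega_1$, and an $L^r$-estimate $\|\omega_1(|Z|)\|_{L^r} \le 2\omega_1(\|Z\|_{L^r})$.

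\textbf{Part (a): subadditivity.} The plan is to show $\omega_1$ is concave on $[0,\infty)$ with $\omega_1(0)=0$; any such function is automatically subadditive, since for $s,t\ge 0$ with $s+t>0$ one writes $s = \frac{s}{s+t}(s+t) + \frac{t}{s+t}\cdot 0$ and similarly for $t$, so concavity gives $\omega_1(s) \ge \frac{s}{s+t}\omega_1(s+t)$ and $\omega_1(t)\ge\frac{t}{s+t}\omega_1(s+t)$; adding yields the claim (the case $s+t=0$ is trivial). To check concavity, I would compute on $(0,1/3)$: $\omega_1(t) = -t\log t$, so $\omega_1'(t) = -\log t - 1$ and $\omega_1''(t) = -1/t < 0$, so $\omega_1$ is strictly concave there. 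On $(1/3,\infty)$, $\omega_1(t) = t\log 3$ is linear, hence concave. At the junction $t = 1/3$ the function is continuous, and the left derivative is $-\log(1/3) - 1 = \log 3 - 1$ while the right derivative is $\log 3$; since $\log 3 - 1 < \log 3$, the derivative jumps \emph{downward}, which is exactly the condition needed for the glued function to remain concave. Hence $\omega_1$ is concave on all of $[0,\infty)$, and subadditivity follows.

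\textbf{Part (b): the $L^r$ bound.} The idea is to exploit concavity via Jensen's inequality, but $\omega_1$ is not monotone-composable with $L^r$ norms directly; instead I would first reduce to the function $t \mapsto \omega_1(t^{1/r})^r$? That seems awkward. A cleaner route: write $a = \|Z\|_{L^r} \ge 0$; if $a = 0$ then $Z = 0$ a.e.\ and both sides vanish, so assume $a > 0$. I want $\E[\omega_1(|Z|)^r] \le 2^r \omega_1(a)^r$. The key observation is a pointwise bound: for any $s \ge 0$ and $a > 0$,
\[
\omega_1(s) \le \omega_1(a)\Bigl(\tfrac{s}{a} + \tfrac{s}{a}\,\ell(s/a)\Bigr)\quad\text{-- need to verify,}
\]
roughly because $\ell$ is ``slowly varying.'' More precisely I would use subadditivity-type scaling: $\ell(s) = \ell(a \cdot (s/a)) \le \ell(a) + \ell(s/a)$ when both stay in the logarithmic regime (here $\ell(xy)\le\ell(x)+\ell(y)$ should hold since $-\log(xy) = -\log x - \log y$ with the cutoffs only helping). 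Then $\omega_1(s) = s\,\ell(s) \le s\,\ell(a) + s\,\ell(s/a) = \frac{s}{a}\omega_1(a) + a\cdot\frac{s}{a}\ell(s/a)$. Hmm, this still needs care. The honest plan: establish the scaling inequality $\omega_1(\lambda t) \le \max(\lambda, \lambda\ell(\lambda))\,\omega_1(t)$ for $\lambda, t > 0$, or more usefully normalize so $a = 1$ by the substitution $Z \mapsto Z/a$ and reduce to showing $\|\omega_1(|Z|)\|_{L^r} \le 2\,\omega_1(1) \cdot$(something) when $\|Z\|_{L^r} = 1$ --- but $\omega_1(1) = \log 3 \ne $ the right constant. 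So the homogeneity is genuinely broken and the factor $2$ is what absorbs the logarithmic correction.

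The cleanest argument I would actually write: by concavity of $t \mapsto \omega_1(t^{1/r})^r$? Let me instead use that $\omega_1(t)^r \le C_r \cdot t^r(\ell(t))^r$ and handle the $(\ell(t))^r$ part by splitting $\{|Z| \le a\}$ and $\{|Z| > a\}$: on the first set $\ell(|Z|) \le \ell(|Z|)$ is controlled using $\ell$ decreasing plus the Jensen bound $\E[|Z|^r \ell(|Z|)^r 1_{|Z|\le a}]$... This is getting intricate. \textbf{The main obstacle} is precisely Part (b): one must quantify that $\omega_1$ fails homogeneity only by a bounded (logarithmic) amount, so that the $L^r$ norm passes through $\omega_1$ at the cost of the constant $2$. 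I expect the author handles this by first proving a scaling lemma $\omega_1(st) \le \omega_1(s) + \omega_1(t)$ or $\omega_1(\lambda t)\le 2\lambda\,\omega_1(t)$ for $\lambda \ge 1$ (and separately $\lambda \le 1$), then combining with Jensen's inequality applied to the concave function $\omega_1$ itself: $\E[\omega_1(|Z|)] \le \omega_1(\E|Z|) \le \omega_1(\|Z\|_{L^r})$ for the $r=1$ case, and bootstrapping to general $r$ via the power function --- but since $\omega_1$ composed with $t\mapsto t^{1/r}$ need not be concave, the factor $2$ (rather than $1$) is exactly the slack needed to make a crude splitting argument work. I would therefore devote most of the writeup to this reduction and treat Part (a) in two lines.
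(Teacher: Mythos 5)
Your proof of part (a) contains a concrete error. You claim $\omega_1$ is concave because the derivative ``jumps downward'' at $t=1/3$; but from your own computation the left derivative is $\log 3 - 1 \approx 0.099$ and the right derivative is $\log 3 \approx 1.099$, so the derivative jumps \emph{upward}. For a concave function the derivative must be non-increasing, so $\omega_1$ is in fact \emph{not} concave on $[0,\infty)$---the junction at $t=1/3$ is a convex kink. Your conclusion (subadditivity) is nonetheless correct, because the argument you wrote only actually uses the weaker ``star-shaped'' property $\omega_1(\lambda x)\ge\lambda\,\omega_1(x)$ for $\lambda\in[0,1]$, which is equivalent to $\omega_1(t)/t = \ell(t)$ being non-increasing. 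That is precisely the paper's route: since $\ell$ is non-increasing, $\omega_1(s+t)=s\,\ell(s+t)+t\,\ell(s+t)\le s\,\ell(s)+t\,\ell(t)=\omega_1(s)+\omega_1(t)$. So you should replace the concavity claim by the one-line monotonicity-of-$\ell$ argument.

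For part (b) you have not actually produced a proof---the writeup candidly stalls on ``the main obstacle'' and explores several dead ends (Jensen applied to $\omega_1\circ t^{1/r}$, scaling lemmas for $\ell$). The paper's argument is a short pointwise split that you gestured at but did not carry out. Set $a=\|Z\|_{L^r}$. If $|Z|\le a$ then, since $\omega_1$ is increasing, $\omega_1(|Z|)\le\omega_1(a)=a\,\ell(a)$. If $|Z|\ge a$ then, since $\ell$ is decreasing, $\omega_1(|Z|)=|Z|\,\ell(|Z|)\le|Z|\,\ell(a)$. Combining the two cases gives the pointwise bound $\omega_1(|Z|)\le(a+|Z|)\,\ell(a)$, and taking $L^r$ norms yields $\|\omega_1(|Z|)\|_{L^r}\le 2a\,\ell(a)=2\,\omega_1(a)$. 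No concavity, Jensen, or scaling lemma is needed; the factor $2$ comes simply from $\|a+|Z|\|_{L^r}\le 2a$.
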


\begin{proof}
(a) Since $\ell$ is decreasing, we have
$\omega_1(s+t)=s\ell(s+t)+t\ell(s+t)\le s\ell(s)+t\ell(t)= \omega_1(s)+\omega_1(t)$.

\vspace{1ex} \noindent
(b) If $|Z|\le \|Z\|_{L^r}$ then $\omega_1(|Z|)\le \omega_1(\|Z\|_{L^r})=\|Z\|_{L^r}\ell( \|Z\|_{L^r})$.
If $|Z|\ge \|Z\|_{L^r}$, then $\omega_1(|Z|)\le |Z|\ell( \|Z\|_{L^r})$.
Hence $\omega_1(|Z|)\le (\|Z\|_{L^r}+|Z|)\ell(\|Z\|_{L^r})$ and it follows that
$\|\,\omega_1(|Z|)\|_{L^r}\le 2\|Z\|_{L^r}\ell( \|Z\|_{L^r})
=2\omega_1(\|Z\|_{L^r})$.
\end{proof}

\begin{prop}\label{prop:sme}
Assume that $p>2$.
Let $w\in \cV_0^\eta(Y^{\varphi})$ and define
$\breve w_n=\sum_{i=0}^{n-1}\breve w\circ F_i$.
Then there exists a constant $C>0$ such that for all $n\ge 1$,
\[
\big\|\max_{1\le k\le n}n^{-1}|\breve w_k|\big\|_{L^{2(p-1)}}\le Cn^{-\frac12}
\quad\text{and}\quad
\big\|\max_{1\le k\le n}\omega_1(n^{-1}|\breve w_k|)\big\|_{L^{2(p-1)}}\le Cn^{-\frac12}\log n.
\]
\end{prop}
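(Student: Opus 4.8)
The plan is to deduce the second estimate from the first using the elementary properties of $\omega_1$, and to prove the first by writing $\breve w_n$, up to a martingale remainder, as a centered quadratic variation of the martingale of Proposition~\ref{prop:MDS} and then controlling the residual Birkhoff sum of $m^2-\sigma^2$ by a secondary martingale--coboundary decomposition. For the reduction I would use that $\omega_1$ is increasing on $[0,\infty)$, so that $\max_{1\le k\le n}\omega_1(n^{-1}|\breve w_k|)=\omega_1\big(\max_{1\le k\le n}n^{-1}|\breve w_k|\big)$; Proposition~\ref{prop:omega}(b) with $r=2(p-1)$, applied to $Z=\max_{1\le k\le n}n^{-1}|\breve w_k|$, then gives $\big\|\max_{1\le k\le n}\omega_1(n^{-1}|\breve w_k|)\big\|_{L^{2(p-1)}}\le 2\,\omega_1\big(\|Z\|_{L^{2(p-1)}}\big)$, and since $\omega_1(t)=t\,\ell(t)$ with $\ell(t)=O(\log(1/t))$ as $t\to0$, the first estimate $\|Z\|_{L^{2(p-1)}}\le Cn^{-1/2}$ yields $2\,\omega_1(Cn^{-1/2})\le C'n^{-1/2}\log n$ for $n\ge2$ (the case $n=1$ being trivial). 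So everything reduces to proving $\big\|\max_{1\le k\le n}|\breve w_k|\big\|_{L^{2(p-1)}}\le Cn^{1/2}$.

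For this I would fix $n$ and work with the martingale $M_k=\sum_{j=1}^{k}m\circ F_{n-j}$ of Proposition~\ref{prop:MDS}. Conditioning $m^2$ exactly as in the proof of that proposition gives $\E\big((m\circ F_{n-j})^2\mid\cG_{n,j-1}\big)=(U_1L_1m^2)\circ F_{n-j}=(\breve w+\sigma^2)\circ F_{n-j}$, so the predictable quadratic variation is $\langle M\rangle_\ell=\ell\sigma^2+\sum_{j=1}^{\ell}\breve w\circ F_{n-j}$. Since $\sum_{j=1}^{n-k}\breve w\circ F_{n-j}=\breve w_n-\breve w_k$ for $0\le k\le n$ (with $k=0$ giving $\breve w_n=\langle M\rangle_n-n\sigma^2$), this would yield $\breve w_k=\breve w_n-(\langle M\rangle_{n-k}-(n-k)\sigma^2)$ and hence
\[
\max_{1\le k\le n}|\breve w_k|\ \le\ 2\max_{0\le\ell\le n}\big|\langle M\rangle_\ell-\ell\sigma^2\big|.
\]
Writing $[M]_\ell=\sum_{j=1}^{\ell}(m\circ F_{n-j})^2$, I would then split $\langle M\rangle_\ell-\ell\sigma^2=\big([M]_\ell-\ell\sigma^2\big)-\sum_{j=1}^{\ell}N_j$, where $N_j=(m\circ F_{n-j})^2-\E\big((m\circ F_{n-j})^2\mid\cG_{n,j-1}\big)$ is a $(\cG_{n,j})$-martingale-difference sequence, identically distributed since $F$ preserves $\mu^\varphi$, and $[M]_\ell-\ell\sigma^2=\sum_{i=n-\ell}^{n-1}(m^2-\sigma^2)\circ F_i$ is a block sum of $m^2-\sigma^2$ along the $F$-orbit. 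The martingale term is immediate: by Doob's maximal inequality and the Burkholder inequality, $\big\|\max_{1\le\ell\le n}|\sum_{j\le\ell}N_j|\big\|_{L^{2(p-1)}}\le C\big\|(\sum_{j\le n}N_j^2)^{1/2}\big\|_{L^{2(p-1)}}\le Cn^{1/2}\|N_1\|_{L^{2(p-1)}}$ (here $2(p-1)\ge2$).

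What is left is a maximal inequality for the partial Birkhoff sums of the mean-zero function $m^2-\sigma^2$ (the block sums above being dominated by twice the prefix sums), and this is the technical heart. The function $m^2$ is not H\"older, but Proposition~\ref{prop:MCD} writes $m=\psi-(\chi\circ F_1-\chi)$ with $\psi=\int_0^1 w\circ F_s\,\rmd s$ bounded and $\chi\in L^{p-1}$, so I would decompose $m^2-\sigma^2$ into a multiple of $\psi^2-\int\psi^2\,\rmd\mu^\varphi$, a cross term $-2\psi(\chi\circ F_1-\chi)$, and a coboundary-square term $(\chi\circ F_1-\chi)^2$, the constants reassembling the variance $\sigma^2=\int m^2\,\rmd\mu^\varphi$ correctly. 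The first summand admits, by the methods of~\cite{P24}, a martingale--coboundary decomposition of the same nature, whose maximal partial sums are $O(n^{1/2})$ from the Burkholder inequality plus $O(n^{1/p})=O(n^{1/2})$ from the coboundary; the cross and coboundary-square terms I would handle using the telescoping structure of $\chi\circ F_k-\chi$ together with the estimate $\|\max_{k\le n}|\chi\circ F_k-\chi|\|_{L^p}\le Cn^{1/p}$ of Proposition~\ref{prop:MCD} and the boundedness of $\psi$. Adding the three contributions would give the required $Cn^{1/2}$ bound.

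The step I expect to be the main obstacle is matching the integrability. The martingale remainder $N_j$ and the quadratic terms above involve $m^2$, $\chi^2$ and $\psi\chi$, which a priori inherit only the integrability of $m\in L^p$ and $\chi\in L^{p-1}$, whereas the target norm is the considerably stronger $L^{2(p-1)}$; bridging this gap will rely on the finer information underlying Proposition~\ref{prop:MCD} (boundedness of $\psi$, the precise $\varphi\in L^p$ tail estimates controlling $\max_{k\le n}|\chi\circ F_k|$, and $L^{2(p-1)}$ moment bounds for Birkhoff sums of bounded observables in the spirit of Proposition~\ref{prop:summ}(c)). In the range $2<p<4$, where $m^2$ just fails to lie in $L^2$, keeping this balance is the part requiring the most care, and is also where the refinement attributed to Fleming-V\'azquez in Remark~\ref{rem:cf} enters.
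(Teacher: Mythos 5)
Your reduction of the second estimate to the first (monotonicity of $\omega_1$ plus Proposition~\ref{prop:omega}(b)) is exactly what the paper does; no issue there. For the first estimate the paper simply cites \cite[Corollary~4.18]{P24}, so you are necessarily building your own proof, but the route you chose contains a genuine gap which you sense in your final paragraph yet underestimate.

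The decomposition $\langle M\rangle_\ell-\ell\sigma^2=\bigl([M]_\ell-\ell\sigma^2\bigr)-\sum_{j\le\ell}N_j$ undoes the conditional expectation $\breve w=\E(m^2-\sigma^2\mid F_1^{-1}\cB)=U_1L_1m^2-\sigma^2$, and that is precisely the wrong direction: the content of \cite[Corollary~4.18]{P24} is that applying $L_1$ to $m^2$ \emph{gains} regularity (via the Gibbs--Markov structure of the transfer operator), so that Birkhoff sums of $\breve w$ admit maximal $L^{2(p-1)}$ bounds in the spirit of Proposition~\ref{prop:summ}(c). Your split discards that gain and exposes the raw $m^2-\sigma^2$, which a priori lives only in $L^{p/2}$. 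For the martingale piece you invoke Doob and Burkholder to reach $Cn^{1/2}\|N_1\|_{L^{2(p-1)}}$, but $N_1$ is distributed like $m^2-U_1L_1m^2$, and finiteness of $\|N_1\|_{L^{2(p-1)}}$ would require $m\in L^{4(p-1)}$, whereas Proposition~\ref{prop:MCD} gives only $m\in L^p$; since $4(p-1)>p$ for every $p>2$, the step you call ``immediate'' in fact fails for all $p>2$, not only in the range $2<p<4$ you flag. The same integrability deficit blocks your treatment of $[M]_\ell-\ell\sigma^2$: telescoping together with $\bigl\|\max_{k\le n}|\chi\circ F_k-\chi|\bigr\|_{L^p}\le Cn^{1/p}$ controls expressions linear in $\chi$, but $(\chi\circ F_1-\chi)^2$ and $\psi(\chi\circ F_1-\chi)$ sit only in $L^{(p-1)/2}$ and $L^{p-1}$ respectively, both strictly weaker than $L^{2(p-1)}$. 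To close the argument one must keep the conditional expectation intact and exploit the regularity of $U_1L_1m^2$ directly, as in \cite[Corollary~4.18]{P24}, rather than expanding $m^2$ and estimating term by term.
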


\begin{proof}
See~\cite[Corollary~4.18]{P24} for the first statement.
By Proposition~\ref{prop:omega}(b),
\(
\big\|\max_{1\le k\le n}\omega_1(n^{-1}|\breve w_k|)\big\|_{L^{2(p-1)}}\le
Cn^{-\frac12}\log n.
\)
\end{proof}

\section{Proof of Theorem~\ref{thm:exp2}}
\label{sec:exp2}

In this section, we prove Theorem~\ref{thm:exp2}. Recall that $\Psi_t:M\to M$ is a nonuniformly expanding semiflow of order $p>2$ with ergodic probability measure $\mu_M$.
Let $F_t:Y^{\varphi}\to Y^{\varphi}$ be the corresponding Gibbs-Markov semiflow with ergodic probability measure $\mu^\varphi$.
We have the measure-preserving semiconjugacy $\pi_M:Y^{\varphi}\to M$.

Let $v\in C_0^\eta(M)$.
By \cite[Proposition~4.3]{P24}, the lifted observable $w=v\circ \pi_M$ lies in $\cV_0^{\eta^2}(Y^{\varphi})$.

Define continuous processes $\tW_n\in C[0,1]$, $n\ge1$,  on $(Y^{\varphi}, \mu^\varphi)$ as
\[
\tW_n(t):=\frac{1}{\sqrt n}\int_{0}^{nt}w\circ F_s \,\rmd s, \quad t\in [0,1].
\]
Then $\tW_n=W_n\circ \pi_M=_d W_n$. So it follows from Proposition~\ref{prop:summ}(b) that $\tW_n\to _w W$, where $W$ is a Brownian  motion with variance $\sigma^2=\int_{Y^{\varphi}} m^2 \,\rmd \mu^\varphi$.
Moreover, $\cW_{\frac{p}{2}}(W_n, W)=\cW_{\frac{p}{2}}(\tW_n, W)$, so
we reduce to proving the following:

\begin{lem} \label{lem:exp}
Suppose that $F_t:Y^\varphi\to Y^\varphi$ is a Gibbs-Markov semiflow of order $p>2$ and let $\eta\in(0,1]$.
Suppose that $w\in \cV_0^{\eta}(Y^{\varphi})$.
Then there is a constant $C>0$ such that
\begin{align*}
  \cW_{\frac{p}{2}}(\tW_n,W)\le
  \begin{cases}
Cn^{-\frac12+\frac{1}{p}}(\log n)^\frac12, & 2<p<4,\\
  Cn^{-\frac14}(\log n)^\frac12,  & p\le 4.
\end{cases}
\end{align*}
\end{lem}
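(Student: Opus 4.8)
\textbf{Proof strategy for Lemma~\ref{lem:exp}.}
The plan is to approximate $\tW_n$ by a rescaled Brownian motion via the Skorokhod embedding of the martingale part of the martingale-coboundary decomposition, following the discrete-time template of \cite{LW24a} (Theorem~3.3) but exploiting the continuous-time decomposition of Proposition~\ref{prop:MCD}. First I would reduce $\int_0^{nt}w\circ F_s\,\rmd s$ to a sum over integer times: writing $\psi=\int_0^1 w\circ F_s\,\rmd s$ and $\psi_k=\sum_{i=0}^{k-1}\psi\circ F_i$, the difference $\int_0^{nt}w\circ F_s\,\rmd s - \psi_{\lfloor nt\rfloor}$ is controlled uniformly in $t$ by $\sup_{s\in[0,1]}|w_s|\in L^p$ using the bounded-below roof function and \eqref{eq:con}; this costs $O(n^{-1/2})$ in $\cW_{p/2}$ after rescaling by $\sqrt n$. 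Next, insert the decomposition $\psi=m+\chi\circ F_1-\chi$ from Proposition~\ref{prop:MCD}: the telescoping coboundary contributes $\max_{k\le n}|\chi\circ F_k-\chi|$, which by Proposition~\ref{prop:MCD} is $O(n^{1/p})$ in $L^p$, hence $O(n^{-1/2+1/p})$ after rescaling. So the task becomes bounding $\cW_{p/2}$ between the rescaled martingale-sum process $n^{-1/2}M_{\lfloor n\cdot\rfloor}$ (where $M_k=\sum_{i=0}^{k-1}m\circ F_i$) and $W$.

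For the martingale term I would use a Skorokhod/Strassen representation: realize $\{M_k\}$ on a common probability space together with a Brownian motion $B$ with $M_k = B(\tau_k)$, where $\tau_k=\sum_{i=1}^k \mathbb E(m^2\circ F_{n-i}\mid \cG_{n,i-1})$ are the conditional variances (using the martingale-difference structure of Proposition~\ref{prop:MDS}). The error $\cW_{p/2}(n^{-1/2}M_{\lfloor n\cdot\rfloor}, W)$ is then governed, via the modulus of continuity of Brownian motion, by $\big\|\sup_{k\le n} |\tau_k - k\sigma^2|\big\|_{L^{p/2}}$ together with a logarithmic factor from the Brownian oscillation estimate $|B(t)-B(s)|\lesssim \sqrt{|t-s|\,\ell(|t-s|/n)}$. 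Now $\tau_k - k\sigma^2 = \sum_{i=0}^{k-1}\breve w\circ F_{n-i} = \breve w_k\circ F_{n-k}$ in distribution (this is exactly why $\breve w$ was introduced), so $\big\|\sup_k n^{-1}|\tau_k-k\sigma^2|\big\|_{L^{p/2}}\le \big\|\sup_k n^{-1}|\breve w_k|\big\|_{L^{2(p-1)}} = O(n^{-1/2})$ by Proposition~\ref{prop:sme}, and the logarithmically-weighted version with $\omega_1$ gives the extra $\log n$. Combining the Brownian-oscillation bound with $\omega_1$ monotonicity and Proposition~\ref{prop:omega}, the martingale contribution to $\cW_{p/2}(\tW_n,W)$ is $O(n^{-1/4}(\log n)^{1/2})$.

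Adding the three contributions $O(n^{-1/2})$ (roof/time-change), $O(n^{-1/2+1/p})$ (coboundary), and $O(n^{-1/4}(\log n)^{1/2})$ (martingale), the dominant term is $n^{-1/2+1/p}(\log n)^{1/2}$ when $2<p<4$ and $n^{-1/4}(\log n)^{1/2}$ when $p\ge 4$, which is the claimed bound. The main obstacle I anticipate is making the Skorokhod-embedding step quantitative in the $\cW_{p/2}$ metric rather than merely in probability: one must carefully combine the $L^{p/2}$ control of the quadratic-variation error with the near-$L^\infty$ Brownian modulus-of-continuity estimate to extract precisely the factor $(\log n)^{1/2}$ (and no worse), which is where the function $\omega_1$ and the strengthened moment bound $\|\sup_k \omega_1(n^{-1}|\breve w_k|)\|_{L^{2(p-1)}}\le Cn^{-1/2}\log n$ of Proposition~\ref{prop:sme} do the essential work; this is also the point where the refinement credited to Fleming-V\'azquez in Remark~\ref{rem:cf} enters, sharpening the exponent of the logarithm compared to \cite{LW24a}.
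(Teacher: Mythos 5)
Your overall architecture matches the paper: Skorokhod embedding of the martingale part, Brownian modulus of continuity encoded through $\omega_{1/2}$ (Theorem~\ref{thm:holder}), and $\breve w$ controlling the drift of the conditional variances via Proposition~\ref{prop:sme}. However, there are two genuine gaps.

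\textbf{The reversal step is missing.} Proposition~\ref{prop:MDS} says that $m\circ F_{n-j}$, $1\le j\le n$, is a martingale difference sequence with respect to $\cG_{n,j}=F_{n-j}^{-1}\cB$; the increments are indexed \emph{backwards} in physical time. The forward sum $M_k=\sum_{i=0}^{k-1}m\circ F_i$ you introduce is therefore \emph{not} a martingale in any usable filtration, and the Skorokhod embedding cannot be applied to it as written (indeed your own $\tau_k$ formula uses $\cG_{n,i-1}$, which is the reversed filtration, so the proposal is internally inconsistent on the indexing). The paper resolves this by building $X_n(t)$ from the reversed increments $\zeta_{n,j}=\frac{1}{\sqrt n\sigma}m\circ F_{n-j}$, showing that $\sigma X_n$ approximates the time-reversed process $g\circ\tW_n$ with $g(u)(t)=u(1)-u(1-t)$ (Lemma~\ref{lem:WnXn}), and then recovering $\cW_{p/2}(\tW_n,W)$ from $\cW_{p/2}(g\circ\tW_n,\sigma B)$ via the Lipschitz mapping theorem since $g\circ g=\mathrm{Id}$ and $W=_dg(W)$. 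Without this reversal, the martingale approximation step has no foundation.

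\textbf{You conflate the Skorokhod stopping times with the conditional variances.} The Skorokhod representation produces random times $\tau_i$ with $\E(\tau_i\mid\cF_{i-1})=\E(\zeta_{n,i}^2\mid\cG_{n,i-1})$; it does \emph{not} let you set $\tau_k$ equal to the conditional variance. Consequently, besides controlling $V_{n,k}-k/n$ via $\breve w$ (which you do), one must separately control the $L^{p/2}$-martingale $T_k-V_{n,k}=\sum_{j\le k}(\tau_j-\E(\tau_j\mid\cF_{j-1}))$ by Burkholder's inequality. It is precisely this Burkholder step that yields $n^{-1+2/p}$ in $L^{p/2}$ for $2<p<4$ (versus $n^{-1/2}$ for $p\ge4$), and hence, after passing through $\omega_{1/2}$, the rate $n^{-1/2+1/p}(\log n)^{1/2}$ in the range $2<p<4$. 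As a symptom of omitting this step, your final accounting is inconsistent: you list a martingale contribution $O(n^{-1/4}(\log n)^{1/2})$ and a coboundary contribution $O(n^{-1/2+1/p})$ without any logarithm, yet conclude with $n^{-1/2+1/p}(\log n)^{1/2}$ for $2<p<4$; the logarithmic factor in that regime cannot come from the terms you have. In the paper it comes from the Burkholder bound on $T_k-V_{n,k}$ entering the Brownian modulus estimate.
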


Let $\{m\circ F_{n-j},\cG_{n,j};1\le j\le n\}$ be the sequence of martingale differences in Proposition~\ref{prop:MDS}.
We define
\[
\zeta_{n,j}:=\frac{1}{\sqrt{n}\sigma}m\circ F_{n-j},
\quad\hbox{for } 1\le j\le n,
\]
and define the conditional variances
\[
V_{n,k}:=\sum_{j=1}^{k}\E(\zeta_{n,j}^2|\cG_{n,j-1}),
\quad\hbox{for } 1\le k\le n.
\]

Define the stochastic process $X_n$ with sample paths in $C[0,1]$ by
\begin{equation}\label{eq:Xn}
X_n(t):=\sum_{j=1}^{[nt]}\zeta_{n,j}+(nt-[nt])\zeta_{n,[nt]+1}, \qquad~~~~ t\in [0,1].
\end{equation}

\begin{rem} \label{rem:cf2} The corresponding definition of $X_n$ in~\cite{LW24a} involved a random time which was required in~\cite{AM19} and~\cite{P24}. It turns out not to be needed for the approach in~\cite{LW24a} and the simplification leads to stronger rates.
\end{rem}

\medskip

We recall the following standard argument from probability theory.
\begin{prop}\label{prop:mpe}
There is a constant $C>0$ such that
\(
\big\|\max_{1\le j\le n}| \zeta_{n,j}|\,\big\|_{L^p}\le Cn^{-\frac12+\frac{1}{p}}
\)
for all $n\ge1$.
\end{prop}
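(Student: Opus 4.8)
The plan is to derive the moment bound for $\max_{1\le j\le n}|\zeta_{n,j}|$ directly from the $L^p$-control on $m$ provided by Proposition~\ref{prop:MCD}. Recall $\zeta_{n,j}=\frac{1}{\sqrt n\,\sigma}\,m\circ F_{n-j}$, so that
\[
\max_{1\le j\le n}|\zeta_{n,j}|=\frac{1}{\sqrt n\,\sigma}\max_{0\le i\le n-1}|m\circ F_i|.
\]
Hence it suffices to show $\big\|\max_{0\le i\le n-1}|m\circ F_i|\big\|_{L^p}\le C n^{1/p}$, since then dividing by $\sqrt n$ gives the exponent $-\tfrac12+\tfrac1p$. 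The key observation is the crude bound
\[
\Big(\max_{0\le i\le n-1}|m\circ F_i|\Big)^p\le \sum_{i=0}^{n-1}|m\circ F_i|^p,
\]
so that, using that $\mu^\varphi$ is $F_1$-invariant (hence $\|m\circ F_i\|_{L^p}=\|m\|_{L^p}$ for every $i$), we get
\[
\big\|\max_{0\le i\le n-1}|m\circ F_i|\big\|_{L^p}^p=\Big\|\max_{0\le i\le n-1}|m\circ F_i|^p\Big\|_{L^1}\le \sum_{i=0}^{n-1}\big\||m\circ F_i|^p\big\|_{L^1}=n\,\|m\|_{L^p}^p.
\]
Taking $p$-th roots yields $\big\|\max_{0\le i\le n-1}|m\circ F_i|\big\|_{L^p}\le n^{1/p}\|m\|_{L^p}$, and then $\|m\|_{L^p}\le C\|w\|_{\cV^\eta}$ from Proposition~\ref{prop:MCD} absorbs the remaining constant into $C$.

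Carrying this out in order: first invoke Proposition~\ref{prop:MCD} to get $m\in L^p(Y^\varphi)$ with $\|m\|_{L^p}\le C\|w\|_{\cV^\eta}$; second, write the maximum of a nonnegative quantity raised to the power $p$ as bounded by the sum of $p$-th powers; third, apply $F_1$-invariance of $\mu^\varphi$ termwise; fourth, take $p$-th roots and rescale by $(\sqrt n\,\sigma)^{-1}$ to conclude. Each of these steps is elementary and the whole argument is a short display-level computation.

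I do not expect any serious obstacle here — this is the standard device of trading a maximum for a sum at the cost of an $n^{1/p}$ factor, and the stationarity of $\{m\circ F_i\}$ under the invariant measure makes the sum trivial to bound. One minor point worth stating explicitly is that $\sigma^2=\int m^2\,\rmd\mu^\varphi>0$, so that division by $\sigma$ is legitimate; this follows from Proposition~\ref{prop:summ}(b) together with the identity $\sigma^2=\int_{Y^\varphi}m^2\,\rmd\mu^\varphi$ noted just before Lemma~\ref{lem:exp}. If one instead wanted the sharp (rather than crude) bound one could appeal to a Rosenthal-type maximal inequality for the martingale differences, but that is unnecessary: the $n^{1/p}$ rate produced by the naive estimate is exactly what is needed for the $2<p<4$ case of Lemma~\ref{lem:exp}, and it is harmless in the $p\ge4$ regime where the $n^{-1/4}$ term dominates.
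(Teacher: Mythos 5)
Your proposal is correct and follows exactly the same route as the paper's proof: bound the $p$-th power of the maximum by the sum of $p$-th powers, use $F_1$-invariance of $\mu^\varphi$ to get $n\|m\|_{L^p}^p$, take $p$-th roots, and divide by $\sqrt{n}\sigma$. The paper states this more tersely, but the argument is identical.
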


\begin{proof}
For all $n\ge 1$, we have
\(
\max_{0\le j\le n-1}|m\circ F_j|^p\le
\sum_{j=0}^{n-1}|m\circ F_j|^p
\)
so
$\|\max_{0\le j\le n-1}|m\circ F_j|\big\|_p\le n^{1/p}\|m\|_p$.
The result follows by definition of $\zeta_{n,j}$ and Proposition~\ref{prop:MCD}.
\end{proof}

\begin{prop}\label{prop:knt}
There exists a constant $C>0$ such that for all $n\ge 1$,
\[
\Big\|\max_{1\le k\le n}|V_{n,k}-\frac{k}{n}|\Big\|_{L^{2(p-1)}}\le Cn^{-1/2}
\quad\text{and}\quad
\Big\|\max_{1\le k\le n}\omega_1\big(\big|V_{n,k}-\frac{k}{n}\big|\big)\Big\|_{L^{2(p-1)}}\le Cn^{-\frac12}\log n.
\]
\end{prop}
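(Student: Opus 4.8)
The plan is to reduce the statement to Proposition~\ref{prop:sme} by deriving an explicit formula for the conditional variances $\E(\zeta_{n,j}^2\mid\cG_{n,j-1})$ in terms of $\breve w$. Since $\zeta_{n,j}^2=\frac{1}{n\sigma^2}\,m^2\circ F_{n-j}$ and, by definition, $\cG_{n,j-1}=F_{n-j+1}^{-1}\cB=F_{n-j}^{-1}\big(F_1^{-1}\cB\big)$ (using $F_{n-j+1}=F_1\circ F_{n-j}$), the fact that $F_{n-j}$ preserves $\mu^\varphi$ together with the identity $U_1L_1=\E(\cdot\mid F_1^{-1}\cB)$ used in the proof of Proposition~\ref{prop:MDS} yields
\[
\E(\zeta_{n,j}^2\mid\cG_{n,j-1})=\frac{1}{n\sigma^2}\,\big(U_1L_1m^2\big)\circ F_{n-j}
=\frac1n+\frac{1}{n\sigma^2}\,\breve w\circ F_{n-j},
\]
where the last equality uses $\breve w=U_1L_1m^2-\sigma^2$ and $\sigma^2>0$ (Proposition~\ref{prop:summ}$(b)$).

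Summing over $1\le j\le k$ and writing $\breve w_m=\sum_{i=0}^{m-1}\breve w\circ F_i$ (so that $\breve w_0=0$), this gives the key identity
\[
V_{n,k}-\frac kn=\frac{1}{n\sigma^2}\sum_{j=1}^k\breve w\circ F_{n-j}
=\frac{1}{n\sigma^2}\big(\breve w_n-\breve w_{n-k}\big),\qquad 1\le k\le n.
\]
Hence $\max_{1\le k\le n}\big|V_{n,k}-\tfrac kn\big|\le\frac{2}{\sigma^2}\max_{1\le m\le n}n^{-1}|\breve w_m|$, and taking $L^{2(p-1)}$ norms and invoking the first estimate of Proposition~\ref{prop:sme} gives the first inequality.

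For the second inequality, since $\omega_1$ is increasing I would pull the maximum inside, $\max_{1\le k\le n}\omega_1\big(|V_{n,k}-\tfrac kn|\big)\le\omega_1\big(\max_{1\le k\le n}|V_{n,k}-\tfrac kn|\big)$, then take $L^{2(p-1)}$ norms, apply Proposition~\ref{prop:omega}(b), and use the first inequality just established:
\[
\Big\|\max_{1\le k\le n}\omega_1\big(|V_{n,k}-\tfrac kn|\big)\Big\|_{L^{2(p-1)}}
\le 2\,\omega_1\!\big(Cn^{-1/2}\big)=2Cn^{-1/2}\,\ell\big(Cn^{-1/2}\big)\le Cn^{-\frac12}\log n,
\]
the last step using $\ell(Cn^{-1/2})=-\log C+\tfrac12\log n$ for $n$ large and adjusting $C$ to absorb the remaining values of $n$. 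Alternatively, one can apply subadditivity of $\omega_1$ (Proposition~\ref{prop:omega}(a)) to $|\breve w_n-\breve w_{n-k}|\le|\breve w_n|+|\breve w_{n-k}|$ and then use the second estimate of Proposition~\ref{prop:sme} directly.

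There is no serious obstacle here: once the identity for $V_{n,k}-\tfrac kn$ is in place the proposition is an immediate corollary of Proposition~\ref{prop:sme}. The only point requiring care is the bookkeeping with the filtration --- recognizing $\cG_{n,j-1}=F_{n-j}^{-1}(F_1^{-1}\cB)$ and using $U_1L_1=\E(\cdot\mid F_1^{-1}\cB)$ to rewrite the conditional variance of $\zeta_{n,j}$ through the function $\breve w$ introduced before Proposition~\ref{prop:omega}.
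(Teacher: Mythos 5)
Your proof is correct and essentially identical to the paper's: both rewrite the conditional variances via $U_1L_1=\E(\cdot\mid F_1^{-1}\cB)$ to obtain $V_{n,k}-\tfrac{k}{n}=\tfrac{1}{n\sigma^2}\sum_{j=1}^k\breve w\circ F_{n-j}$ and then invoke Proposition~\ref{prop:sme}. You merely fill in a little more detail, namely the filtration bookkeeping $\cG_{n,j-1}=F_{n-j}^{-1}(F_1^{-1}\cB)$ and the telescoping identification $\sum_{j=1}^k\breve w\circ F_{n-j}=\breve w_n-\breve w_{n-k}$, which is exactly what makes the reduction to Proposition~\ref{prop:sme} go through.
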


\begin{proof}
We follow the argument of \cite[Proposition~5.5]{P24}, which is based on~\cite[Proposition~4.1]{AM19}.
Write
\begin{align*}
|V_{n,k}-\frac{k}{n}|&=\Big|\frac{1}{n\sigma^2}\sum_{j=1}^k\E(m^2\circ F_{n-j}|F_{n-(j-1)}^{-1}\cB)-\frac{k}{n}\Big|\\
&=\frac{1}{n\sigma^2}\Big|\sum_{j=1}^k\E(m^2-\sigma^2|F_{1}^{-1}\cB)\circ F_{n-j}\Big|
=\frac{1}{n\sigma^2}\Big|\sum_{j=1}^k \breve w\circ F_{n-j}\Big|.
\end{align*}
Hence the result follows from Proposition~\ref{prop:sme}.
\end{proof}

\begin{lem}\label{lem:xnw}
Let $B$ denote standard Brownian motion.
There exists a constant $C>0$ such that for all $n\geq 1$,
\[
\cW_{\frac{p}{2}}(X_n,B)\leq
\begin{cases}
Cn^{-\frac12+\frac{1}{p}}(\log n)^\frac12, & 2<p<4,\\
  Cn^{-\frac14}(\log n)^\frac12,  & p\ge4.
\end{cases}
\]
\end{lem}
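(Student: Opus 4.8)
\emph{The plan} is to construct a coupling of $X_n$ with a standard Brownian motion $B$ via the Skorokhod embedding theorem and then bound the resulting $C[0,1]$-distance in $L^{p/2}$, which dominates $\cW_{p/2}(X_n,B)$. Fix $n$. The partial sums $S_{n,k}=\sum_{j=1}^k\zeta_{n,j}$ form an $L^p$-bounded martingale with respect to $\{\cG_{n,k}\}$ (Propositions~\ref{prop:MCD} and~\ref{prop:MDS}). By the martingale Skorokhod embedding theorem, after enlarging the probability space there are a standard Brownian motion $B$ and stopping times $0=\tau_{n,0}\le\cdots\le\tau_{n,n}$, for a filtration refining $\{\cG_{n,k}\}$ to which $B$ is adapted, such that $B(\tau_{n,k})=S_{n,k}$, $\E(\tau_{n,k}-\tau_{n,k-1}\mid\cG_{n,k-1})=\E(\zeta_{n,k}^2\mid\cG_{n,k-1})$, and $\E(|\tau_{n,k}-\tau_{n,k-1}|^q\mid\cG_{n,k-1})\le C_q\E(|\zeta_{n,k}|^{2q}\mid\cG_{n,k-1})$ for all $q\ge1$. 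Letting $\hat X_n\in C[0,1]$ be the polygonal process interpolating the points $(k/n,B(\tau_{n,k}))$, we have $\hat X_n=_d X_n$, so $\cW_{p/2}(X_n,B)\le\|d_C(\hat X_n,B)\|_{L^{p/2}}$ and it remains to bound the right-hand side.

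The second step is the standard three-term decomposition. With $D_n=\max_{1\le k\le n}|\tau_{n,k}-k/n|$ and $L_n=\max(1,\tau_{n,n})$, comparing $\hat X_n$ with $B$ on each subinterval $[(k-1)/n,k/n]$ gives $d_C(\hat X_n,B)\le A_1+A_2+A_3$, where $A_1=\max_k|B(\tau_{n,k})-B(k/n)|$ (so $A_1\le\sup\{|B(s)-B(t)|:s,t\in[0,L_n],\,|s-t|\le D_n\}$), $A_2=\max_k|\zeta_{n,k}|$, and $A_3=\sup\{|B(s)-B(t)|:s,t\in[0,1],\,|s-t|\le 1/n\}$. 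The last two are immediate: Proposition~\ref{prop:mpe} gives $\|A_2\|_{L^{p/2}}\le\|A_2\|_{L^p}\le Cn^{-\frac12+\frac1p}$, and the classical $L^r$ bound for the Brownian modulus of continuity with deterministic radius gives $\|A_3\|_{L^{p/2}}\le C\sqrt{\omega_1(1/n)}\le Cn^{-\frac12}(\log n)^{\frac12}$; both are within the asserted rates. Since the estimates below yield $\|L_n-1\|_{L^{p/2}}\to 0$, one may work on the event $\{L_n\le 2\}$, the complement contributing a negligible term by Markov's inequality together with the Burkholder--Davis--Gundy bound $\E(\sup_{s\le\tau_{n,n}}|B(s)|^p)\le C\E\tau_{n,n}^{p/2}\le C$.

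The main work is the term $A_1$. First I would split $D_n\le D_n'+D_n''$ with $D_n'=\max_k|\tau_{n,k}-V_{n,k}|$ and $D_n''=\max_k|V_{n,k}-\tfrac kn|$. The process $N_k=\tau_{n,k}-V_{n,k}$ is a martingale with $|N_k-N_{k-1}|\le(\tau_{n,k}-\tau_{n,k-1})+\E(\zeta_{n,k}^2\mid\cG_{n,k-1})$, and Burkholder--Davis--Gundy, the Skorokhod conditional-moment bounds, and $\|m\|_{L^p}\le C\|w\|_{\cV^\eta}$ (Proposition~\ref{prop:MCD}) give $\|D_n'\|_{L^{p/2}}\le Cn^{-1/2}$ for $p\ge4$ and $\le Cn^{2/p-1}$ for $2<p<4$ (using, in the latter range, subadditivity of $t\mapsto t^{p/4}$ to handle the bracket process). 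For $D_n''$, Proposition~\ref{prop:knt} gives $\|D_n''\|_{L^{p/2}}\le Cn^{-1/2}$ (in fact with room to spare, in $L^{2(p-1)}$, and also an estimate for $\omega_1(D_n'')$ there). Combining these via $\omega_1(s+t)\le\omega_1(s)+\omega_1(t)$ (Proposition~\ref{prop:omega}(a)) and $\|\omega_1(Z)\|_{L^{p/2}}\le 2\omega_1(\|Z\|_{L^{p/2}})$ (Proposition~\ref{prop:omega}(b), valid since $p/2\ge1$) yields $\|\omega_1(D_n)\|_{L^{p/2}}\le Cn^{-1/2}\log n$ for $p\ge4$ and $\le Cn^{2/p-1}\log n$ for $2<p<4$. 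Finally, on $\{L_n\le 2\}$ a Garsia--Rodemich--Rumsey bound gives $\sup\{|B(s)-B(t)|:s,t\in[0,2],\,|s-t|\le\delta\}\le\Xi\sqrt{\omega_1(\delta)}$ pathwise (for all $\delta\in(0,2]$) with $\E e^{c\Xi^2}<\infty$; applying Hölder's inequality with the balanced exponents $(p,p)$ then gives $\|A_1\|_{L^{p/2}}\le\|\Xi\|_{L^p}\,\|\sqrt{\omega_1(D_n)}\|_{L^p}\le C\|\omega_1(D_n)\|_{L^{p/2}}^{1/2}$. Hence $\|A_1\|_{L^{p/2}}\le Cn^{-1/4}(\log n)^{1/2}$ for $p\ge4$ and $\le Cn^{-\frac12+\frac1p}(\log n)^{1/2}$ for $2<p<4$; adding the three estimates proves the lemma.

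\emph{The hard part} is Step~3, namely extracting the sharp logarithmic power $(\log n)^{1/2}$ rather than a larger one. This hinges on two balancing acts: the Hölder split with exponents $(p,p)$ in the oscillation estimate, so that the Gaussian-tailed constant $\Xi$ contributes only a bounded factor while $\sqrt{\omega_1(D_n)}$ is measured in $L^p$ (i.e.\ $\omega_1(D_n)$ in $L^{p/2}$); and the fact that the estimates of Proposition~\ref{prop:knt} come with extra integrability (in $L^{2(p-1)}$), which is exactly what lets $\|\omega_1(D_n'')\|_{L^{p/2}}$ be absorbed. The martingale part $D_n'$ must be peeled off and controlled directly through the Skorokhod moment bounds and $\|m\|_{L^p}$, since it carries no extra integrability and cannot be handled through the oscillation of $B$. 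Everything else --- the interval-by-interval estimate of Step~2, the bounds on $A_2$ and $A_3$, and the localization to $\{L_n\le 2\}$ --- is routine.
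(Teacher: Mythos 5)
Your proposal is correct and follows essentially the same route as the paper's proof: Skorokhod embedding, Burkholder's inequality for $\tau_{n,k}-V_{n,k}$, Proposition~\ref{prop:knt} for $V_{n,k}-k/n$, the $\omega_1$-calculus of Proposition~\ref{prop:omega}, localization to a set where $\tau_{n,n}$ is bounded, and then H\"older with exponents $(p,p)$ against an $L^p$-integrable normalized Brownian modulus of continuity. The only cosmetic differences are that you split $B(T_{[nt]})-B(t)$ into $A_1+A_3$ rather than handling $|T_{[nt]}-t|$ (with its extra $1/n$ term) in one piece, and you invoke a Garsia--Rodemich--Rumsey bound for the Brownian oscillation where the paper proves the equivalent moment estimate directly in Appendix Theorem~\ref{thm:holder}.
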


\begin{proof}
The proof is similar to that in \cite[Lemma~4.4]{LW24a} and we follow the same steps omitting some arguments that are identical.

\vspace{1ex}
\noindent \textbf{(1)}
\hspace{1em}
By the Skorokhod embedding theorem (see \cite[Theorem A.1]{HH80}),
there exists a probability space (depending on $n$) supporting a
copy of the sequence $\{\zeta_{n,j},\,1\le j\le n\}$ (with unchanged joint distributions), a standard Brownian motion $B$, a sequence of nonnegative random variables $\tau_1,\ldots, \tau_n$ with $T_i=\sum_{j=1}^{i}\tau_j$, and a sequence of $\sigma$-fields $\cF_{i}$
generated by all events up to $T_i$ for $1\le i\le n$, such that for all $1\le i\le n$,
\begin{itemize}
\item $\sum_{j=1}^{i}\zeta_{n,j}=B(T_i)$;
\item $\E(\tau_{i}|\cF_{i-1})=\E(|\zeta_{n,i}|^{2}|\cG_{n,i-1})$ a.s.;
\item for any $p\ge 1$, there exists a constant $C_p>0$ such that
\[
\E(\tau_{i}^p|\cF_{i-1})\leq C_p\E(|\zeta_{n,i}|^{2p}|\cG_{n,i-1}) \quad \hbox{a.s.}
\]
\end{itemize}
On this probability space, we show that there exists a constant $C>0$ (independent of $n$) such that
\begin{align*}
\Big\|\sup_{t\in[0,1]}|X_n(t)-B(t)|\Big\|_{L^{\frac{p}{2}}}\leq
\begin{cases}
Cn^{-\frac12+\frac{1}{p}}(\log n)^\frac12, & 2<p<4,\\
  Cn^{-\frac14}(\log n)^\frac12,  & p\ge4.
  \end{cases}
\end{align*}
Then the result follows from the definition of the Wasserstein distance.

For ease of exposition, we write $\zeta_j$ and $V_k$ instead of $\zeta_{n,j}$ and $V_{n,k}$ respectively. By~\eqref{eq:Xn},
\begin{align}\label{eq:set}
X_n(t)=B(T_{[nt]})+(nt-[nt])\big(B(T_{[nt]+1})-B(T_{[nt]})\big),\quad \hbox{for}~t\in [0,1].
\end{align}

\vspace{1ex}
\noindent \textbf{(2)}
\hspace{1em}
Following the argument in \cite[Lemma~4.4]{LW24a},
$T_k-V_k=\sum_{j=1}^k (\tau_j-\E(\tau_j|\cF_{j-1}))$  is an $L^{\frac{p}{2}}$ martingale so it follows by Burkholder's inequality that
\[
\Big\|\max_{1\le k\le n}|T_k-V_{k}|\Big\|_{L^{\frac{p}{2}}} \le
\begin{cases}
  Cn^{\frac{2}{p}}
\max_{1\le k\le n}\|\tau_k\|_{L^{\frac{p}{2}}}
, & 2<p<4,\\
  Cn^{\frac{1}{2}}
\max_{1\le k\le n}\|\tau_k\|_{L^{\frac{p}{2}}}
,  & p\ge4.
  \end{cases}
\]
But
$\|\tau_k\|_{L^{\frac{p}{2}}} \le C\|\zeta_k\|_{L^p}^2=C\sigma^{-2}n^{-1}\|m\|_{L^p}^2\le C n^{-1}$. Hence
\[
\Big\|\max_{1\le k\le n}|T_k-V_{k}|\Big\|_{L^{\frac{p}{2}}} \le
\begin{cases}
  Cn^{-1+\frac{2}{p}}
, & 2<p<4,\\
  Cn^{-\frac{1}{2}}
,  & p\ge4.
  \end{cases}
\]
Similarly, by Proposition~\ref{prop:omega}(b),
\[
\big\|\max_{1\le k\le n} \omega_1(|T_k-V_k|)\big\|_{L^{\frac{p}{2}}}\le
\begin{cases}
Cn^{-1+\frac{2}{p}}\log n,  & 2<p<4,\\
C n^{-\frac12}\log n, & p\ge4.
\end{cases}
\]

In addition, by Proposition~\ref{prop:knt},
\[
\|V_n-1\|_{L^{2(p-1)}}\le  C n^{-\frac{1}{2}} \quad\text{and}\quad
\big\|\,\omega_1(|V_n-1|)\big\|_{L^{2(p-1)}}\le  C n^{-\frac12}\log n.
\]

\vspace{1ex}
\noindent \textbf{(3)}
\hspace{1em}
We now estimate $|X_n-B|$ on the set $\{|T_n-1|> 1\}$.
By Chebyshev's inequality and the estimates in \textbf{(2)},
\begin{equation*}
\mu^\varphi(|T_n-1|>1)\le \E|T_n-1|^{\frac{p}{2}} \le
\begin{cases}
  Cn^{-\frac{p}{2}+1}, & 2<p<4,\\
  Cn^{-\frac{p}{4}},  & p\ge4.
  \end{cases}
\end{equation*}
Hence we deduce that
\begin{align*}
& \Big\| 1_{\{|T_n-1|>1\}}\sup_{t\in[0,1]}|X_n(t)-B(t)|\Big\|_{L^{\frac{p}{2}}}\\
& \qquad \le \big(\mu^\varphi(|T_n-1|>1)\big)^{\frac{1}{p}}\Big(\big\|\sup_{t\in[0,1]}|X_n(t)|\big\|_{L^p}+\big\|\sup_{t\in[0,1]}|B(t)|\big\|_{L^p}\Big)\\
&\qquad \le
\begin{cases}
  Cn^{-\frac{1}{2}+\frac{1}{p}}, & 2<p<4,\\
  Cn^{-\frac{1}{4}},  & p\ge4.
\end{cases}
\end{align*}

\noindent \textbf{(4)}
\hspace{1em}
By \textbf{(3)}, it remains to estimate $|X_n-B|$ on the set $\{|T_n-1|\le 1\}$. Now
\[
\Big\| 1_{\{|T_n-1|\le 1\}}\sup_{t\in[0,1]}|X_n(t)-B(t)|\Big\|_{L^{\frac{p}{2}}} \le I_1+I_2,
\]
where
\[
I_1  =:
\Big\| \sup_{t\in[0,1]}|X_n(t)-B(T_{[nt]})|\Big\|_{L^{\frac{p}{2}}}, \quad
I_2  =:\Big\|1_{\{|T_n-1|\le 1\}}\sup_{t\in[0,1]}|B(T_{[nt]})-B(t)|\Big\|_{L^{\frac{p}{2}}}.
\]
By \eqref{eq:set} and Proposition~\ref{prop:mpe},
\[
I_1
\le \big\| \max_{1\le k\le n}|\zeta_{k}|\big\|_{L^p}
\le Cn^{-\frac{1}{2}+\frac{1}{p}}.
\]

\noindent \textbf{(5)}
\hspace{1em}
By \textbf{(3)} and \textbf{(4)}, it remains to estimate $I_2$.
We claim that
\begin{equation}\label{eq:te}
\big\|\sup_{t\in[0,1]}\omega_{\frac12}(|T_{[nt]}-t|)\big\|_{L^p}
\le
\begin{cases}
Cn^{-\frac12+\frac{1}{p}}(\log n)^\frac12, & 2<p<4,\\
  Cn^{-\frac14}(\log n)^\frac12,  & p\ge 4.
\end{cases}
\end{equation}

Assuming the claim, we proceed as in~\cite[Lemma~4.4]{LW24a}, but with a slight improvement.
By Theorem~\ref{thm:holder},
\begin{equation}\label{eq:holder}
\Big\|\sup_{0\le s<t\le2}\frac{|B(t)-B(s)|}{\omega_{\frac12}(t-s)}\Big\|_{L^p}< \infty.
\end{equation}
On the set $\{|T_n-1|\le 1\}$, note that
\[
\sup_{t\in[0,1]}|B(T_{[nt]})-B(t)|\le \Big(\sup_{0\le s<t\le 2}\frac{|B(t)-B(s)|}{\omega_{\frac12}(t-s)}\Big)\big(\sup_{t\in[0,1]}\omega_{\frac12}(|T_{[nt]}-t|)\big).
\]
By H\"{o}lder's inequality,
\begin{align*}
I_2
\le &\Big\|\sup_{0\le s<t\le 2}\frac{|B(t)-B(s)|}{\omega_{\frac12}(t-s)}
\Big\|_{L^p}\big\|\sup_{t\in[0,1]}\omega_{\frac12}(|T_{[nt]}-t|)\big\|_{L^p}
\le
C\big\|\sup_{t\in[0,1]}\omega_{\frac12}(|T_{[nt]}-t|)\big\|_{L^p},
\end{align*}
so the result follows from the claim.

It remains to verify the claim. Given $t\in(0,1]$, choose $k$ such that
$t\in(\frac{k-1}{n},\frac{k}{n}]$. Then
$T_{[nt]}-t=(T_k-V_k)+(V_k-\frac{k}{n})+(\frac{k}{n}-t)$ and so
\[
\sup_{t\in[0,1]}|T_{[nt]}-t|\le \max_{1\le k\le n}|T_k-V_k|+\max_{1\le k\le n}|V_k-\tfrac{k}{n}|+\tfrac{1}{n}.
\]
Applying the estimates for
$\big\|\max_{1\le k\le n} \omega_1(|T_k-V_k|)\big\|_{L^{\frac{p}{2}}}$ from
\textbf{(2)} and
$\big\|\max_{1\le k\le n}\omega_1(|V_k-\frac{k}{n}|)\big\|_{L^p}$ in Proposition~\ref{prop:knt}
and using Proposition~\ref{prop:omega}(a),
\[
\big\|\sup_{t\in[0,1]}\omega_1(|T_{[nt]}-t|)\big\|_{L^\frac{p}{2}}
\le
\begin{cases}
Cn^{-1+\frac{2}{p}}\log n, & 2<p<4,\\
  Cn^{-\frac12}\log n,  & p\ge 4.
\end{cases}
\]
The claim follows since
$\big\|\sup_{t\in[0,1]}\omega_\frac12(|T_{[nt]}-t|)\big\|_{L^p}=
\big\|\sup_{t\in[0,1]}\omega_1(|T_{[nt]}-t|)\big\|_{L^\frac{p}{2}}^\frac12$.
\end{proof}

Define $g:C[0,1]\to C[0,1]$ by $g(u)(t):=u(1)-u(1-t)$.

\begin{lem}\label{lem:WnXn}
Let $p>2$. Then there exists a constant $C>0$ such that $\cW_p(g\circ \tW_n,\sigma X_n)\leq Cn^{-\frac{1}{2}+\frac{1}{p}}$ for all $n\geq 1$.
\end{lem}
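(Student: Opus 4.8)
The plan is to use the diagonal coupling: $g\circ\tW_n$ and $\sigma X_n$ are both $C[0,1]$-valued random variables on the single probability space $(Y^\varphi,\mu^\varphi)$, so
\(
\cW_p(g\circ\tW_n,\sigma X_n)\le\big\|\sup_{t\in[0,1]}|(g\circ\tW_n)(t)-\sigma X_n(t)|\big\|_{L^p(\mu^\varphi)},
\)
and it suffices to bound this $L^p$ norm by $Cn^{-1/2+1/p}$. No weak-convergence input is needed; the whole argument is an explicit comparison.

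First I would unwind both sides. By the definitions of $g$ and $\tW_n$,
\[
(g\circ\tW_n)(t)=\tW_n(1)-\tW_n(1-t)=\frac{1}{\sqrt n}\int_{n-nt}^{n}w\circ F_s\,\rmd s .
\]
Writing $N=[nt]$ and recalling $\sigma\zeta_{n,j}=n^{-1/2}\,m\circ F_{n-j}$, the definition~\eqref{eq:Xn} gives
\[
\sigma X_n(t)=\frac{1}{\sqrt n}\Big(\sum_{i=n-N}^{n-1}m\circ F_i+(nt-N)\,m\circ F_{n-N-1}\Big).
\]
The key algebraic step is to insert the martingale--coboundary decomposition $\psi=m+\chi\circ F_1-\chi$ of Proposition~\ref{prop:MCD}, with $\psi=\int_0^1 w\circ F_s\,\rmd s$ so that $\psi\circ F_i=\int_i^{i+1}w\circ F_s\,\rmd s$; summing $\psi\circ F_i=m\circ F_i+\chi\circ F_{i+1}-\chi\circ F_i$ over $i=n-N,\dots,n-1$ telescopes the coboundary and yields
\[
\sum_{i=n-N}^{n-1}m\circ F_i=\int_{n-N}^{n}w\circ F_s\,\rmd s-\chi\circ F_n+\chi\circ F_{n-N}.
\]

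Subtracting, the two time-integral blocks cancel up to the short remainder $n^{-1/2}\int_{n-nt}^{n-N}w\circ F_s\,\rmd s$ over an interval of length $nt-N<1$, so that
\[
(g\circ\tW_n)(t)-\sigma X_n(t)=\frac{1}{\sqrt n}\Big(\int_{n-nt}^{n-N}w\circ F_s\,\rmd s+\chi\circ F_n-\chi\circ F_{n-N}-(nt-N)\,m\circ F_{n-N-1}\Big).
\]
Bounding term by term and taking $\sup_{t\in[0,1]}$ (equivalently $\max$ over $N\in\{0,\dots,n\}$): the integral is $\le n^{-1/2}|w|_\infty$; the coboundary difference satisfies $|\chi\circ F_n-\chi\circ F_{n-N}|\le|\chi\circ F_n-\chi|+|\chi\circ F_{n-N}-\chi|\le 2\max_{1\le i\le n}|\chi\circ F_i-\chi|$; and $(nt-N)|m\circ F_{n-N-1}|\le\max_{0\le j\le n-1}|m\circ F_j|$ (the coefficient vanishes when $N=n$, so $F_{-1}$ never occurs). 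Taking $L^p$ norms: the coboundary term contributes $O(n^{1/p})$ by Proposition~\ref{prop:MCD}, and the $m$-term contributes $O(n^{1/p})$ via $\max_{0\le j\le n-1}|m\circ F_j|^p\le\sum_{j=0}^{n-1}|m\circ F_j|^p$ together with $\|m\|_{L^p}\le C\|w\|_{\cV^\eta}$, as in the proof of Proposition~\ref{prop:mpe}. Dividing by $\sqrt n$ gives $\big\|\sup_t|(g\circ\tW_n)(t)-\sigma X_n(t)|\big\|_{L^p}\le Cn^{-1/2+1/p}$, hence the lemma.

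I do not expect a real obstacle here; it is essentially bookkeeping. The two points needing care are (i) getting the index ranges in the telescoping identity right, so that the surviving coboundary endpoints come out as exactly $\chi\circ F_n$ and $\chi\circ F_{n-N}$; and (ii) the simple but essential observation that $\chi\circ F_n-\chi\circ F_{n-N}$ should be estimated through the maximal coboundary $\max_{1\le i\le n}|\chi\circ F_i-\chi|$, which lies in $L^p$ with growth $n^{1/p}$ by Proposition~\ref{prop:MCD}, rather than through $\max_j|\chi\circ F_j|$, since $\chi$ is only known to lie in $L^{p-1}$ and that cruder route would cost an extra power of $n^{1/p}$.
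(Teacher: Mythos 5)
Your proof is correct and follows essentially the same route as the paper's own argument (which in turn adapts~\cite[Lemma~4.7]{AM19}): take the diagonal coupling, rewrite $g\circ\tW_n$ as $n^{-1/2}\int_{n-nt}^n w\circ F_s\,\rmd s$, use the martingale--coboundary decomposition $\psi=m+\chi\circ F_1-\chi$ to telescope the difference against $\sigma X_n$, and bound the surviving coboundary endpoints via Proposition~\ref{prop:MCD} and the boundary/interpolation remainder via the $L^p$ maximal estimate for $m$. The paper packages the interpolation and fractional-interval pieces into a single error term $E_n(t)$ while you write them out explicitly, but the decomposition, the reliance on $\max_{1\le k\le n}|\chi\circ F_k-\chi|$ rather than on $\chi$ itself, and the final $O(n^{-1/2+1/p})$ bookkeeping are identical.
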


\begin{proof}
The proof follows that in \cite[Lemma~4.7]{AM19} with obvious notional changes.
Write
\begin{align*}
g( \tW_n(t))-\sigma X_n(t)
& =\frac{1}{\sqrt n}\Big(\int_{n-[nt]}^nw\circ F_s \,\rmd s-\sum_{j=1}^{[nt]}m\circ F_{n-j}\Big)+E_n(t)
\\ & =\frac{1}{\sqrt n}\Big(\sum_{j=1}^{[nt]}\psi\circ F_{n-j}-\sum_{j=1}^{[nt]}m\circ F_{n-j} \Big)+E_n(t)
\\ & =\frac{1}{\sqrt n}(\chi\circ F_n-\chi\circ F_{n-[nt]})+E_n(t),
\end{align*}
where
\(
|E_n(t)|\le \frac{1}{\sqrt n}\|w\|_\infty + \max_{0\le j\le n}|\zeta_{n,j}|,
\)
so
$\big\|\sup_{t\in[0,1]}|E_n(t)|\big\|_{L^p}\le Cn^{-\frac{1}{2}+\frac{1}{p}}$
by Proposition~\ref{prop:mpe}.

By Proposition~\ref{prop:MCD},
\[
\big\|\sup_{t\in[0,1]}|\chi\circ F_n-\chi\circ F_{n-[nt]}|\big\|_p
\le 2 \big\|\max_{0\le j\le n}|\chi\circ F_j-\chi|\big\|_p \le n^{1/p}.
\]
Hence
$\big\|\sup_{t\in[0,1]}|g(\tW_n(t))-\sigma X_n(t)|\big\|_{L^p}\le
Cn^{-\frac{1}{2}+\frac{1}{p}}$
and the result follows.
\end{proof}

\begin{proof}[Proof of Lemma~\ref{lem:exp}]
Note that $g\circ g= Id$ and $g$ is Lipschitz with $\Lip g$ $\le 2$.
Also, $g(W)=_d W=_d\sigma B$.
By the Lipschitz mapping theorem (see~\cite[Proposition~2.4]{LW24a}),
\[
\cW_{\frac{p}{2}}(\tW_n,W)= \cW_{\frac{p}{2}}(g(g\circ \tW_n),g(g\circ W))\le
2\cW_{\frac{p}{2}}(g\circ \tW_n,g\circ W)
=2\cW_{\frac{p}{2}}(g\circ \tW_n,\sigma B).
\]
By Lemmas~\ref{lem:xnw} and~\ref{lem:WnXn},
\begin{align*}
\cW_{\frac{p}{2}}(g\circ \tW_n, \sigma B)
&\le \cW_p(g\circ \tW_n, \sigma X_n)+\cW_{\frac{p}{2}}(\sigma X_n,\sigma B)
\le
\begin{cases}
Cn^{-\frac12+\frac{1}{p}}(\log n)^\frac12, & 2<p<4,\\
  Cn^{-\frac14}(\log n)^\frac12,  & p\ge4.
\end{cases}
\end{align*}
The result follows.

\end{proof}

\section{Nonuniformly hyperbolic flows}
\label{sec:NUHflow}

\setcounter{equation}{0}

In this section, we show how the main results in Section~\ref{sec:main} extend from nonuniformly expanding semiflows to nonuniformly hyperbolic flows.
The techniques for quotienting out stable directions are more-or-less standard following~\cite{B75,R78,S72} but remain somewhat tricky for flows. For Axiom~A flows, this was done in~\cite{MT02}. Then~\cite{AMV15} covered the case of singular hyperbolic flows, including the classical Lorenz attractor. In the more general situation considered here, we follow
arguments in \cite[Section~6]{P24}.  We note that these techniques are restricted to the case where there is exponential contraction along stable manifolds, even though we allow nonuniform expansion.\footnote{The condition of exponential contraction along stable manifolds is relaxed somewhat in~\cite{BBM19} in the study of rates of decay of correlations for flows.}

In Subsection~\ref{sec:setup}, we introduce the
setup. In Subsection~\ref{sec:mainflows}, we state our main results for flows and give several examples. In Subsection~\ref{sec:hyp2}, we provide a sketch of the main result for flows.

\subsection{The setup}
\label{sec:setup}

Let $(M,d)$ be a bounded metric space and let $\Psi_t:M\to M$ be a flow, satisfying $\Psi_0=\text{Id}$ and $\Psi_{t+s}=\Psi_t\circ\Psi_s$
for $s,t\in \R$.
As in Section~\ref{sec:main}, we assume continuous dependence on initial conditions \eqref{eq:con} and Lipschitz continuity in time \eqref{eq:tim}.
We suppose that there is a Borel subset $X\subset M$ with first return time $h:X\to\R^+$ satisfying $h\in C^\eta(X)$ and $\inf h\ge1$.

$\bullet$  We suppose that there is a ``uniformly hyperbolic'' subset $Y\subset X$, an at most countable measurable partition $\{Y_j\}$ of $Y$, and an integrable return time function $r:Y\to \Z^+$ constant on each $Y_j$ such that $T^{r(y)}y\in Y$. Define $F:Y\to Y$ as $Fy=T^{r(y)}y$.
We suppose that $\mu_Y$ is an ergodic $F$-invariant Borel probability measure on $Y$.
We define a separation time $s(y,y')$ on $Y$ as the least integer $n\ge0$ such that $F^ny$ and $F^ny'$ belong to different partition elements.

$\bullet$ We suppose that there is a measurable partition $\cQ^s$ of $Y$ consisting of ``stable leaves'' refining $\{Y_j\}$.\footnote{More standard notation is $\cW^s$ but we already used $W$ for processes and $\cW$ for Wasserstein distance.} Let $Q^s(y)$ be the stable leaf containing $y\in Y$. We assume that $FQ^s(y)\subset Q^s(Fy)$.
Define the quotient space $\bY=Y/\cQ^s$ with projection $\bpi:Y\to\bY$
and partition $\{\bY\!_j\}$ where $\bY\!_j=\bpi Y_j$.
We also have the quotient map $\bF:\bY\to\bY$ with ergodic invariant probability measure $\bmu_Y=\bpi_*\mu_Y$.  We suppose that $\bF$ is a Gibbs-Markov map as in Section~\ref{sec:NUE}.

$\bullet$ We require that there is a measurable subset $\tY\subset Y$ such that for every $y\in Y$, there is a unique $\tilde y\in \tY\cap Q^s(y)$. Let $\pi:Y\to \tY$ denote the associated projection.
Let $\beta_n(y)=N$ be the unique integer such that
\[
\sum_{j=0}^{N-1}r(F^jy)\le n< \sum_{j=0}^{N}r(F^jy).
\]
This counts the number of ``good'' returns of the map $T$ to $Y$ by time $n$.
We suppose that there exist $C>0$ and $\gamma\in(0,1)$ such that
\begin{equation} \label{eq:T}
d(T^ny,T^ny')\le C(\gamma^n d(y,y')+\gamma^{s(y,y')-\beta_n(y)})
\quad\text{for all $y,y'\in Y$, $n\ge0$,}
\end{equation}
and
\begin{equation} \label{eq:T2}
d(T^ny,T^ny')\le C\gamma^{s(y,y')-\beta_n(y)}
\quad\text{for all $y,y'\in \tY$, $n\ge0$.}
\end{equation}

\begin{rem} \label{rmk:T}
In particular, we have contraction of $T$ along stable leaves:
\[
d(T^ny,T^ny')\le C\gamma^n d(y,y')
\quad\text{for all $y,y'\in Y$ with $y'\in Q^s(y)$, $n\ge0$.}
\]
\end{rem}

Let $\varphi:Y\to [1,\infty)$ be defined as $\varphi(y)=\sum_{i=0}^{r(y)-1}h(T^iy)$. Since $r\in L^1(Y)$ and $\varphi\le |h|_\infty r$,
we have $\varphi\in L^1(Y)$. Define the suspension $Y^{\varphi}=\{(y,u)\in Y\times \R: 0\le u\le \varphi(y)\}/\sim$,
where $(y,\varphi(y))\sim (Fy,0)$. The suspension flow $F_t:Y^{\varphi}\to Y^{\varphi}$ is given by $F_t(y,u)=(y,u+t)$ computed modulo identifications.
The projection $\pi_M:Y^{\varphi}\to M$, $\pi_M(y,u)=\Psi_uy$ is a semiconjugacy between $F_t$ and $\Psi_t$.
We have an ergodic $F_t$-invariant probability measure $\mu^\varphi=(\mu_Y\times \text{Lebesgue})/\bar \varphi$,
where $\bar\varphi=\int_Y\varphi\,\rmd \mu_Y$.
Then $\mu_M=(\pi_M)_*\mu^\varphi$ is an ergodic $\Psi_t$-invariant probability measure on $M$.

A flow $(\Psi_t,M,\mu_M)$ satisfying these assumptions is called a
\emph{nonuniformly hyperbolic flow of order $p$} if $r$ (and hence $\varphi$) is $L^p$.

\subsection{Main results for flows}
\label{sec:mainflows}
Let $v\in C_0^{\eta}(M)$ and define continuous processes $W_n\in C[0,1]$ as
\[
W_n(t)=\frac{1}{\sqrt{n}}\int_0^{nt}v\circ\Psi_s\,\rmd s,\quad t\in[0,1].
\]
Then Proposition~\ref{prop:summ} still holds for nonuniformly hyperbolic flows; see~\cite{KKM18,KKM22,MN05,MT12} for example. The statements of our main results are completely analogous to those of Theorems~\ref{thm:exp1} and~\ref{thm:exp2}.

\begin{thm}\label{thm:hyp1}
Let $\Psi_t:M\to M$ be a nonuniformly hyperbolic flow of order $p>2$ and $v\in C_0^{\eta}(M)$. Then $\cW_q(W_n,W)\to 0$ in $C[0,1]$ for all $1\le q< 2(p-1)$.
\end{thm}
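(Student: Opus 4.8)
The plan is to reduce Theorem~\ref{thm:hyp1} to the corresponding statement for semiflows, Theorem~\ref{thm:exp1}, by quotienting out the stable directions, exactly as Theorem~\ref{thm:exp2} will be reduced via the semiconjugacy $\pi_M$ but now with the additional step of passing from $Y^\varphi$ to the quotient suspension $\bY^{\bar\varphi}$. First I would lift the observable: set $w=v\circ\pi_M\in\cV_0^{\eta^2}(Y^\varphi)$ (using \cite[Proposition~4.3]{P24} as in Section~\ref{sec:exp2}) and define $\tW_n(t)=\frac1{\sqrt n}\int_0^{nt}w\circ F_s\,\rmd s$ on $(Y^\varphi,\mu^\varphi)$, so that $\tW_n=W_n\circ\pi_M=_d W_n$ and hence $\cW_q(W_n,W)=\cW_q(\tW_n,W)$. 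Thus it suffices to prove $\lim_{n\to\infty}\cW_q(\tW_n,W)=0$ for the suspension flow $F_t:Y^\varphi\to Y^\varphi$ over the (generally only hyperbolic, not expanding) map $F$, under the contraction hypotheses \eqref{eq:T}, \eqref{eq:T2} and Remark~\ref{rmk:T}.

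The core analytic step is the approximation of $w$ by an observable that depends only on the quotient dynamics. Following the standard construction (\cite{B75,MT02,AMV15} and \cite[Section~6]{P24}), one writes, up to an $L^\infty$ coboundary $\tilde\chi\circ F_1-\tilde\chi$ with $\tilde\chi$ bounded, $\psi=\int_0^1 w\circ F_s\,\rmd s = \bar\psi\circ\bpi_M + (\tilde\chi\circ F_1-\tilde\chi)$, where $\bar\psi\in\cV_0^\eta(\bY^{\bar\varphi})$ is a function on the quotient suspension and $\bpi_M:Y^\varphi\to\bY^{\bar\varphi}$ is the projection semiconjugating $F_t$ with the quotient suspension flow $\bF_t$. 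The exponential contraction \eqref{eq:T}--\eqref{eq:T2} is exactly what makes the Sinai-type telescoping sum defining $\tilde\chi$ converge with geometric rate, giving $\tilde\chi\in L^\infty$ (indeed bounded H\"older); boundedness of $\tilde\chi$ makes the coboundary contribution to $\tW_n$ of size $O(n^{-1/2})$ uniformly, hence negligible. Consequently $\tW_n$ and $\bar W_n\circ\bpi_M$ (where $\bar W_n$ is the analogous process built from $\bar\psi$ on $\bY^{\bar\varphi}$) differ by $O(n^{-1/2})$ in sup-norm in every $L^q$, so $\cW_q(\tW_n,\bar W_n)\to0$; but $\bF_t:\bY^{\bar\varphi}\to\bY^{\bar\varphi}$ is a Gibbs-Markov semiflow of order $p$, so Theorem~\ref{thm:exp1} (via its reduction, Theorem~\ref{thm:exp2}'s framework, or directly the statement) gives $\cW_q(\bar W_n,W)\to0$ for $1\le q<2(p-1)$. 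A triangle inequality for $\cW_q$ then finishes the proof.

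The main obstacle is the construction of the bounded transfer function $\tilde\chi$ and the verification that $\bar\psi$ genuinely descends to a function in $\cV_0^\eta(\bY^{\bar\varphi})$ with finite quotient-H\"older norm. The subtlety, as the text flags, is that we only have exponential contraction along stable leaves while permitting \emph{nonuniform} expansion in the base, so the return-time function $\varphi$ is only $L^p$ and the separation time $s(y,y')$ is controlled by \eqref{eq:T}--\eqref{eq:T2} rather than by a simple geometric estimate in $d$; one must keep track of the index $\beta_n(y)$ counting good returns, and the time-coordinate discrepancy in the suspension (the identification $(y,\varphi(y))\sim(Fy,0)$ vs.\ $(\bar y,\bar\varphi(\bar y))\sim(\bF\bar y,0)$) has to be absorbed into $\tilde\chi$ as well, using the Lipschitz-in-time bound \eqref{eq:tim}. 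Here I would follow \cite[Section~6]{P24} essentially verbatim, since all the needed estimates are carried out there; the only point requiring care is checking that the rate-free conclusion of Theorem~\ref{thm:exp1} does not demand anything stronger than what the $L^p$ tails of $\varphi$ provide — and since Theorem~\ref{thm:exp1} only needs the WIP plus the uniform $L^{2(p-1)}$ moment bound of Proposition~\ref{prop:summ}(c), both of which hold for nonuniformly hyperbolic flows by \cite{KKM18,KKM22,MT12}, this causes no difficulty.
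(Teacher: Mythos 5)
Your approach is correct in spirit but considerably more elaborate than the paper's own proof, which is essentially one sentence. The paper observes at the start of Section~\ref{sec:mainflows} that Proposition~\ref{prop:summ} --- the WIP and the $L^{2(p-1)}$ moment bound for $W_n$ --- holds directly for nonuniformly hyperbolic flows of order $p$, citing \cite{KKM18,KKM22,MN05,MT12}; with those two ingredients, the proof of Theorem~\ref{thm:exp1} (weak convergence plus uniform integrability of $\sup_{t\in[0,1]}|W_n(t)|^q$ for $q<2(p-1)$, combined with the Villani characterization in Proposition~\ref{prop:lid}) carries over verbatim, with no quotienting whatsoever. Your proposal instead performs the full reduction to the quotient Gibbs--Markov semiflow --- lifting via $\pi_M$, a Sinai-type coboundary construction of $\tilde\chi$, projection to $\bY^{\bar\varphi}$, and a triangle inequality --- which is exactly the machinery the paper reserves for the rate result Theorem~\ref{thm:hyp2} but which is unnecessary for the rate-free statement. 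You actually come very close to noticing the shortcut in your final paragraph, where you correctly remark that Theorem~\ref{thm:exp1} ``only needs the WIP plus the uniform $L^{2(p-1)}$ moment bound of Proposition~\ref{prop:summ}(c), both of which hold for nonuniformly hyperbolic flows''; that observation \emph{is} the paper's proof, and once you have it, the quotienting detour buys you nothing for this theorem. There is also a minor mismatch in your long route: Theorem~\ref{thm:exp1} is stated for $v\in C_0^\eta(M)$ on a nonuniformly expanding semiflow, whereas after quotienting you are left with $\bv\in\cV_0^1(\bY^{\bar\varphi})$ on the quotient suspension, so what you really need is the Gibbs--Markov semiflow analogue of Theorem~\ref{thm:exp1} (fixable, but it should be said). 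In short: your argument can be made to work, but the paper's proof is a one-liner because it applies Proposition~\ref{prop:summ} to the flow directly and never quotients.
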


\begin{proof}
Since $W_n$ has a finite moment of order $2(p-1)$, together with Proposition~\ref{prop:lid} and the fact that $W_n\to_{w} W$ as $n\to\infty$, we can obtain the conclusion just as in the proof of Theorem~\ref{thm:exp1}.
\end{proof}

\begin{thm}\label{thm:hyp2}
Let $\Psi_t:M\to M$ be a nonuniformly hyperbolic flow of order $p>2$ and $v\in C_0^{\eta}(M)$.  Then there is a constant $C>0$ such that
\begin{align*}
  \cW_{\frac{p}{2}}(W_n,W)\le
  \begin{cases}
Cn^{-\frac12+\frac{1}{p}}(\log n)^\frac12, & 2<p<4,\\
  Cn^{-\frac14}(\log n)^\frac12,  & p\ge4.
  \end{cases}.
\end{align*}
\end{thm}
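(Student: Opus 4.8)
The plan is to reduce Theorem~\ref{thm:hyp2} for nonuniformly hyperbolic flows to Theorem~\ref{thm:exp2} (equivalently Lemma~\ref{lem:exp}) for nonuniformly expanding semiflows, by quotienting out the stable directions. The overall structure mirrors the strategy used to pass from maps to their expanding quotients in \cite{MT02,AMV15,P24}: one builds a Gibbs-Markov semiflow on the quotient suspension, transfers the observable to it up to a controlled error, and then invokes the already-proven rate for the quotient.

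First I would set up the quotient suspension. Using the partition $\cQ^s$ of $Y$ into stable leaves and the quotient map $\bF:\bY\to\bY$, which by hypothesis is Gibbs-Markov as in Section~\ref{sec:NUE}, I form the induced roof function $\bar\varphi$ on $\bY$ (well-defined since, as one checks, $\varphi$ is constant on stable leaves up to the usual Hölder-in-separation-time error controlled by \eqref{eq:T}), the suspension $\bY^{\bar\varphi}$, the Gibbs-Markov semiflow $\bF_t:\bY^{\bar\varphi}\to\bY^{\bar\varphi}$ of order $p$, and its invariant measure $\bar\mu^{\bar\varphi}$. The projection $\bpi$ lifts to a measure-preserving semiconjugacy $\bY^{\varphi}$-side to $\bY^{\bar\varphi}$-side intertwining $F_t$ and $\bF_t$. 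The point is that $\bF_t$ satisfies the hypotheses of Lemma~\ref{lem:exp}, so the Wasserstein rate holds for processes built from any observable in $\cV_0^{\eta}(\bY^{\bar\varphi})$.

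Next I would transfer the observable. Given $v\in C_0^\eta(M)$, the lift $w=v\circ\pi_M\in\cV_0^{\eta^2}(Y^{\varphi})$ as in Section~\ref{sec:exp2}. The standard device (following \cite[Section~6]{P24}, going back to \cite{B75,S72}) is to choose, via the projection $\pi:Y\to\tY$ along stable leaves and \eqref{eq:T2}, a cohomologous observable $\hat w=w+\zeta\circ F_1-\zeta$ on $Y^{\varphi}$ with $\zeta\in L^\infty$ (or at least with good $L^p$-control of its Birkhoff-type differences $\max_{k\le n}|\zeta\circ F_k-\zeta|$, bounded by $Cn^{1/p}$ as in Proposition~\ref{prop:MCD}), such that $\hat w$ is constant on stable leaves and hence descends to $\bar w\in\cV_0^{\eta'}(\bY^{\bar\varphi})$ for a suitable Hölder exponent $\eta'$. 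The exponential contraction along stable manifolds (Remark~\ref{rmk:T}) together with $\inf h\ge1$ is exactly what makes this cohomological reduction work for the flow. Then the processes $\tW_n$ built from $w$ differ from those built from $\hat w$ by $\frac1{\sqrt n}(\zeta\circ F_n-\zeta\circ F_{n-[nt]})+(\text{small})$, whose sup-norm is $O(n^{-\frac12+\frac1p})$ in $L^p$, exactly as in the proof of Lemma~\ref{lem:WnXn}; and the process from $\hat w$ equals (in distribution) the process from $\bar w$ on $\bY^{\bar\varphi}$. Applying Lemma~\ref{lem:exp} to $\bar w$ on the Gibbs-Markov semiflow $\bF_t$ gives $\cW_{p/2}$-rate $Cn^{-\frac12+\frac1p}(\log n)^\frac12$ for $2<p<4$ and $Cn^{-\frac14}(\log n)^\frac12$ for $p\ge4$; combining via the triangle inequality for $\cW_{p/2}$ (and $\cW_{p/2}\le\cW_p$ for the coboundary error term) yields the claim for flows.

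I expect the main obstacle to be the cohomological reduction step — constructing $\zeta$ and verifying that $\hat w$ is genuinely constant on stable leaves with the right Hölder regularity on the quotient, while keeping quantitative $L^p$-control of $\max_{k\le n}|\zeta\circ F_k-\zeta|$. The subtlety specific to flows (as opposed to maps) is that the stable leaves and the suspension coordinate interact: one must check $\varphi$ and $h$ behave well along $\cQ^s$ so that $\bar\varphi$ and the descended observable are well-defined and Hölder in the separation-time metric, and one must handle the discrepancy between the flow time $n$ and the number of good returns $\beta_n$ via \eqref{eq:T}--\eqref{eq:T2}. All of this is essentially carried out in \cite[Section~6]{P24}; the present theorem follows by feeding the resulting quotient data into Lemma~\ref{lem:exp} in place of the cruder rate used there. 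I would therefore present the argument as: (i) build $\bF_t:\bY^{\bar\varphi}\to\bY^{\bar\varphi}$ and check it is a Gibbs-Markov semiflow of order $p$; (ii) cite \cite{P24} for the cohomological reduction of $w$ to a quotient observable $\bar w$ with coboundary error $O(n^{-\frac12+\frac1p})$ in $\cW_p$; (iii) apply Lemma~\ref{lem:exp} to $\bar w$ and conclude by the triangle inequality.
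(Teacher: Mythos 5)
Your proposal follows essentially the same route as the paper's proof in Section~\ref{sec:hyp2}: reduce to the quotient Gibbs--Markov semiflow $\bF_t:\bY^{\bar\varphi}\to\bY^{\bar\varphi}$ via the cohomological constructions of \cite[Section~6]{P24} (first making the roof function and then the observable constant along stable leaves), bound the coboundary error (the paper even gets the slightly better $O(n^{-1/2})$ since $\chi\in L^\infty$, but your $O(n^{-1/2+1/p})$ also suffices), and apply Lemma~\ref{lem:exp} to the quotient observable with the triangle inequality for the Wasserstein distance. This matches the paper's argument in substance.
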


\begin{exam}[Axiom A flows] \label{ex:solenoid}
By~\cite{B73}, Axiom A flows~\cite{BR75,S67} are nonuniformly hyperbolic of order $p$ for all $1\le p<\infty$ (indeed $r$ and hence $\varphi$ are bounded), so Theorem~\ref{thm:hyp2} applies to $\cW_q$ for all $q\ge1$ and we obtain the rate
$\cW_q(W_n,W)\leq Cn^{-\frac{1}{4}}(\log n)^\frac12$.
This includes Anosov flows~\cite{A67} (such as geodesic flows on negatively curved manifolds) and solenoids.
\end{exam}

\begin{exam}[Planar periodic Lorentz gases]
The $2$-dimensional periodic Lorentz gas is a model of electron gases in metals studied by Sina{\u\i}~\cite{S70}. The Lorentz flow is a dispersing billiard flow on $M=(\T^2\setminus\Omega)\times S^1$, $\Omega=\bigcup_{i=1}^{k}\Omega_i$, where the obstacles $\Omega_i$ are disjoint convex regions with $C^3$ boundaries of nonvanishing curvature. The Poincar\'{e} map (or collision map) $T$ is a dispersing billiard defined on $X=\partial \Omega\times[-\frac{\pi}{2},\frac{\pi}{2}]$. Under the finite horizon condition, which means that the roof function (or collision time) $h$ is bounded, Young~\cite{Y98} demonstrated that $T$ has exponential decay of correlations. In particular, it follows from~\cite{Y98} that $\Psi_t$ is uniformly hyperbolic of order $p$ for all $1\le p<\infty$.
By Theorem~\ref{thm:hyp2}, we obtain  $\cW_q(W_n,W)\leq Cn^{-\frac{1}{4}}(\log n)^\frac12$ for all $q\ge1$.
\end{exam}

\begin{exam}[Dispersing billiards with cusps]
Next, we consider dispersing billiard flows with cusps, where the boundary curves are all dispersing but the interior angles at corner points are zero. Chernov and Markarian~\cite{CM07} proved that the billiard map has slow decay of correlations with rate $1/n$. However the collision time is not bounded below.
By considering an an
alternative cross section $X'$ bounded away from the cusps, B\'alint and Melbourne~\cite{BM08} proved that the corresponding Lorentz flow is superpolynomially mixing. As a byproduct of  proving this, they showed that the flow is nonuniformly hyperbolic of order $p$ for all $1\le p<\infty$. Hence, by Theorem~\ref{thm:hyp2}, we obtain  $\cW_q(W_n,W)\leq Cn^{-\frac{1}{4}}(\log n)^\frac12$ for all $q\ge1$.
\end{exam}

\begin{exam}[The Lorenz attractor]
Statistical limit laws and exponential decay of correlations were proved for the classical Lorenz attractor in~\cite{AM16,AMV15,BM18,HM07}. In particular, it follows from~\cite{HM07} that the Lorenz attractor is nonuniformly hyperbolic of order $p$ for all $1\le p<\infty$. Hence, by Theorem~\ref{thm:hyp2}, we obtain  $\cW_q(W_n,W)\leq Cn^{-\frac{1}{4}}(\log n)^\frac12$ for all $q\ge1$.
The same result holds for singular hyperbolic attractors by~\cite{ArM19}.
\end{exam}

\begin{exam}[Intermittent solenoidal flows]
Let $T_0:[0,1]\to[0,1]$ be the intermittent map considered in Example~\ref{ex:lsv}.
We consider a diffeomorphism $T:X\to X$ introduced in \cite{AP08}, obtained by replacing the expanding map in the classical solenoid map by $T_0$, with exponential contraction along stable leaves.
Hence we can construct an intermittent flow $\Psi_t:M\to M$ with $T:X\to X$ as a Poincar\'e map and H\"older return time function
$h:X\to [0,\infty)$.
This yields a uniformly expanding flow of order $p$ for all $p<1/\beta$ where
$\beta$ is the parameter in Example~\ref{ex:lsv}.
Hence by Theorem~\ref{thm:hyp2}, for $q< \frac{1}{2\beta}$, we obtain the $q$-Wasserstein convergence rate $n^{-\frac14}(\log n)^\frac12$
for $\beta\in(0,\frac{1}{4})$ and $n^{-\frac12+\beta+\delta}$ for $\beta\in [\frac{1}{4},\frac{1}{2})$.
\end{exam}

\subsection{Proof of Theorem~\ref{thm:hyp2}}
\label{sec:hyp2}

Following the arguments in \cite[Section~6]{P24}, the proof consists of two ingredients:

\begin{itemize}
\item[(1)] Reduction to a roof function that is constant along stable leaves.
\item[(2)] Reduction to an observable that is constant along stable leaves.
\end{itemize}
We sketch the constructions, referring to \cite{P24} for further details.

\vspace{1ex}
\noindent{\bf Step 1: Reduction to a constant roof function along stable leaves.}
Let $\gamma_1=\gamma^{\eta/2}$,
$\gamma_2=\gamma_1^\eta$.
Define
\[
\chi_Y:Y\to\R, \qquad
\chi_Y(y)=\sum_{n=0}^\infty  \big\{
(h\circ T^n) (\pi y)
- (h\circ T^n) (y) \big\}.
\]
By~\eqref{eq:T} and~\eqref{eq:T2},
$\chi_Y\in L^\infty$ and
\[
|\chi_Y(y)-\chi_Y(y')|\le C\big(d(y,y')^\eta+\gamma_1^{s(y,y')}\big)
\quad\text{
for all $y,y'\in Y$},
\]
(see~\cite[Lemma~8.4]{BBM19} or~\cite[Proposition~6.7]{P24}).

Now define
\[
\tilde \varphi=\varphi+\chi_Y\circ F-\chi_Y.
\]
Then $\tilde\varphi:Y\to\R$ lies in $L^p$ and, by construction,
$\tilde\varphi$ is constant along stable leaves.
Replacing $F$ and $\varphi$ by $F^k$ and $\sum_{j=0}^{k-1}\varphi\circ F^j$ for a fixed sufficiently large $k$, we can suppose without loss of generality that $\inf\tilde \varphi\ge1$ so that $\tilde \varphi$ is a roof function.
By~\cite[Proposition~6.1]{BBM19} or~\cite[Proposition~6.8]{P24},
\[
|\tilde \varphi(y)-\tilde \varphi(y')|\le C(\infYj \tilde \varphi)\gamma_1^{s(y,y')}
\quad\text{for all $y,y'\in Y_j$, $j\ge 1$.}
\]

Let $Y^{\tilde \varphi}$ be the suspension over the map $F:Y\to Y$ with roof function $\tilde \varphi$ and
let $\tF_t:Y^{\tilde \varphi}\to Y^{\tilde \varphi}$ be the associated suspension flow with ergodic invariant probability measure
$\mu_{\tilde \varphi}$.
We can define a measure-preserving semiconjugacy
\[
g:Y^{\tilde\varphi}\to Y^\varphi,
\qquad g(y,u)=(y,u+\chi_Y(y)),
\]
between $\tF_t$ and $F_t$.
Then $\tilde\pi_M=\pi_M\circ g:Y^{\tilde\varphi}\to M$
is a measure-preserving semiconjugacy between $\tF_t$ and $\Psi_t$.

By~\cite[Proposition~6.10]{P24}, there exists $\gamma_3\in (0,\gamma_2)$ such that
\begin{equation} \label{eq:expflow}
d\big(\tilde\pi_M\circ \tilde F_t(y,0),\tilde\pi_M\circ \tilde F_t(y',0)\big)\le C\gamma_3^t
\end{equation}
for all $y,y'\in Y$ with $y'\in Q^s(y)$ and all $t\ge0$.

Finally, let $v\in C^\eta(M)$. By~\cite[Proposition~6.11]{P24}, the lifted observable $v\circ\tilde\pi_M:Y^{\tilde\varphi}\to\R$
 satisfies
\[
|v\circ\tilde\pi_M(y,u)-v\circ\tilde\pi_M(y',s)|\le C\big((\infYj\tilde\varphi)\gamma_2^{s(y,y')}+|u-s|^\eta\big)
\]
for all $(y,u)$, $(y',s)\in Y^{\tilde\varphi}$ such that $y,y'\in \tY_j$.

\vspace{1ex}
\noindent{\bf Step 2: Reduction to a constant observable along stable leaves.}
By Step~1, we can assume without loss that the flow $\Psi_t$ is modelled by a suspension flow $F_t$ defined on a suspension $Y^\varphi$ with roof function $\varphi$ constant along stable leaves and satisfying
\[
|\varphi(y)-\varphi(y')|\le C(\infYj \varphi)\gamma_1^{s(y,y')}
\quad\text{for all $y,y'\in Y_j$, $j\ge 1$.}
\]
Moreover, we may assume the exponential contraction~\eqref{eq:expflow} along stable leaves and we may suppose that the lifted observable
$v\circ\pi_M:Y^\varphi\to\R$ satisfies
\[
|v\circ\pi_M(y,u)-v\circ\pi_M(y',s)|\le C\big((\infYj\varphi)\gamma_2^{s(y,y')}+|u-s|^\eta\big)
\]
for all $(y,u)$, $(y',s)\in Y^{\varphi}$ such that $y,y'\in \tY_j$.

Let $\bY^{\bar\varphi}$ be the suspension over the quotient map $\bF:\bY\to\bY$ with roof function $\bar\varphi$ and
let $\bF_t:\bY^{\bar\varphi}\to \bY^{\bar\varphi}$ be the associated suspension flow. Since $\bF$ is a Gibbs-Markov map and the roof function
$\bar\varphi$ satisfies
\[
|\bar\varphi(y)-\bar \varphi(y')|\le C(\infbYj \bar \varphi)\gamma_1^{s(y,y')}
\quad\text{for all $y,y'\in \bY\!_j$, $j\ge 1$,}
\]
the quotient semiflow $\bF_t:\bY^{\bar\varphi}\to \bY^{\bar\varphi}$ is a Gibbs-Markov semiflow
w.r.t.\ the metric $d(y,y')=\gamma_1^{s(y,y')}$,

Given $v\in C^\eta(M)$, define
\[
\chi:Y^\varphi\to\R, \qquad
\chi=\sum_{n=0}^\infty\{ v\circ\pi_M\circ F_n -
v\circ\pi_M\circ F_n\circ \pi\},
\]
where $\pi(y,u)=(\pi y,u)$.
It follows from~\eqref{eq:expflow} that $\chi\in L^\infty(Y^\varphi)$.

Define
\[
\hv:Y^\varphi\to\R, \qquad \hv=v\circ\pi_M-\chi+\chi\circ F_1.
\]
By construction, $\hv$ is constant along stable leaves and hence projects to an observable $\bv:\bY^{\bar\varphi}\to \R$.

By~\cite[Proposition~6.17]{P24}, there exists $\gamma_4\in(0,\gamma_3)$ such that
\[
|\hv(y,u)-\hv(y',u)|\le C(\infYj\varphi)\gamma_4^{s(y,y')}
\]
for all $(y,u),(y',u)\in Y^\varphi$ with $y,y'\in Y_j$, $j\ge1$.
Hence $\bv\in \cV^1(\bY^{\bar\varphi})$ with the metric $d(y,y')=\gamma_4^{s(y,y')}$.

\begin{proof}[Proof of Theorem~\ref{thm:hyp2}]
First we claim that to get the rates for $W_n$ it suffices to prove them for the sequence
\[
\bW\!_n(t)=\frac{1}{\sqrt n}\int_0^{nt}\bv\circ \bF_s\,\rmd s
\]
defined on the probability space $(\bY^{\bar \varphi}, \bar\mu^{\bar \varphi})$.

Consider the sequences
\[
W'_n(t)=\frac{1}{\sqrt n}\int_0^{nt}v\circ\pi_M\circ F_s\,\rmd s,\qquad W''_n(t)=\frac{1}{\sqrt n}\int_0^{nt}\hv\circ F_s\,\rmd s.
\]
Note that $W'_n=W_n\circ \pi_M$ and $W''_n=\bW\!_n\circ \bar\pi$. Since $\pi_M$ and $\bar\pi$ are
measure-preserving, $W_n=_dW'_n$ and $\bW\!_n=_dW''_n$. For all $t\in \R$,
\[
\int_0^{t}(\chi-\chi\circ F_1)\circ F_s\,\rmd s=\int_0^{t}\chi\circ F_s\,\rmd s-\int_1^{t+1}\chi\circ F_s\,\rmd s=
\int_0^{1}\chi\circ F_s\,\rmd s-
\int_t^{t+1}\chi\circ F_s\,\rmd s.
\]
It follows that for all $q\ge1$,
\begin{align*}
\cW_q(W_n,\bW\!_n)  =\cW_q(W'_n, W''_n) & \le \big|\sup_{t\in [0,1]}|W'_n(t)-W''_n(t)|\big|_\infty
\\ & =n^{-1/2}\Big|\sup_{t\in [0,1]}\Big|\int_0^{nt}(\chi-\chi\circ F_1)\circ F_s\,\rmd s\Big|\,\Big|_\infty\le 2n^{-1/2}|\chi|_\infty.
\end{align*}
proving the claim.

Since $\bF_t$ is a Gibbs-Markov semiflow and $\bv\in\cV^1_0(\bY^{\bar\varphi})$,
the rate for $\cW_{\frac{p}{2}}(\bW\!_n, W)$ follows from Lemma~\ref{lem:exp}.
\end{proof}

\appendix

\section{Kolmogorov-type estimate}

The following result is presumably known to experts. Since we could not find a statement in the literature, we provide a proof.\footnote{The proof is based on an argument in ``Moments of the H\"{o}lder norm of Brownian process'', M.\ Kwa\'{s}nicki, Math.\ Overflow, 2018. https://mathoverflow.net/questions/304238/moments-of-the-hölder-norm-of-brownian-process.}
Recall that $\omega=\omega_\frac12$ is an increasing function agreeing with
$(t|\log t|)^\frac12$ for $t$ small.

\begin{thm}  \label{thm:holder}
$\E\Big(\sup_{0\le s<t\le 1}\dfrac{|B(t)-B(s)|}{\omega(t-s)}\Big)^p<\infty$ for all $1\le p<\infty$.
\end{thm}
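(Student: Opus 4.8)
The plan is to prove the theorem via a chaining/Garsia--Rodemich--Rumsey-type argument, exploiting the precise form of $\omega=\omega_{\frac12}$, which matches $(t|\log t|)^{1/2}$ for small $t$ — exactly the Lévy modulus of continuity of Brownian motion. The key point is that dividing by $\omega(t-s)$ rather than $(t-s)^{1/2-\delta}$ is the sharp normalization, so the supremum is a.s.\ finite with moments of all orders, and one must extract quantitative tail control rather than mere a.s.\ finiteness. First I would reduce to a dyadic statement. For $n\ge0$ let $D_n=\{k2^{-n}:0\le k\le 2^n\}$ and let $M_n=\max_{0\le k<2^n}|B((k+1)2^{-n})-B(k2^{-n})|$. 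By Gaussian tails, $\P(M_n\ge x\,2^{-n/2})\le 2^n\cdot 2e^{-x^2/2}$, so choosing $x=c\sqrt{n}$ gives, for suitable $c$, a summable-in-$n$ bound; more usefully, $\E\exp(\lambda\, 2^n\, (M_n/2^{-n/2})$-type) estimates let one control $\sup_n M_n/(n2^{-n})^{1/2}$ in $L^p$ for every $p$.

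The core estimate is the following standard chaining fact: for any $s<t$ in $[0,1]$, writing $2^{-N-1}<t-s\le 2^{-N}$, one can connect $s$ and $t$ through a telescoping sequence of dyadic points so that
\[
|B(t)-B(s)|\ \le\ C\sum_{n\ge N} M_n.
\]
Hence
\[
\sup_{0<t-s\le 1}\frac{|B(t)-B(s)|}{\omega(t-s)}\ \le\ C\sup_{N\ge0}\frac{1}{\omega(2^{-N-1})}\sum_{n\ge N}M_n.
\]
Now $\omega(2^{-N-1})\asymp (N\,2^{-N})^{1/2}$ for $N$ large, so it suffices to bound
\[
Z:=\sup_{N\ge0}(N+1)^{-1/2}2^{N/2}\sum_{n\ge N}M_n
\]
in $L^p$ for all $p$. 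Using $M_n\le R\,(n+1)^{1/2}2^{-n/2}$ where $R:=\sup_n (n+1)^{-1/2}2^{n/2}M_n$, the inner sum is $\le R\sum_{n\ge N}(n+1)^{1/2}2^{-n/2}\le C R\,(N+1)^{1/2}2^{-N/2}$ (the geometric factor dominates the polynomial), giving $Z\le CR$. So everything reduces to showing $\|R\|_{L^p}<\infty$ for all $p$, which follows from the union bound $\P(R> u)\le \sum_n 2^n\cdot 2\exp(-u^2(n+1)/2)$: for $u\ge u_0$ this is $\le \sum_n 2\exp(n\log 2-u^2(n+1)/2)\le C e^{-u^2/4}$, a Gaussian tail, hence all moments are finite.

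The main obstacle — really the only non-bookkeeping point — is establishing the chaining inequality $|B(t)-B(s)|\le C\sum_{n\ge N}M_n$ with the sum starting exactly at the scale $N$ with $2^{-N}\asymp t-s$, since getting the \emph{sharp} normalization by $\omega$ (as opposed to $(t-s)^{1/2-\delta}$) hinges on not losing this. One writes $s$ and $t$ in binary, finds the largest dyadic interval $[a,b]$ of length $2^{-N}$ or $2^{-N+1}$ containing a nearby dyadic point between them, and telescopes from $s$ up to a dyadic endpoint and from $t$ down to one, each step changing the dyadic level by one and contributing at most one term of the form (an increment at level $n$) $\le M_n$, with each level used boundedly many times; summing over $n\ge N$ (from both sides) gives the claim. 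The remaining steps — the Gaussian tail bound for $M_n$, the geometric-series estimate, and passing from the $L^p$ bound on $R$ to the theorem — are routine. I would also remark that the same argument works verbatim on $[0,2]$, which is the interval actually needed in~\eqref{eq:holder}.
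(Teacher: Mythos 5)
Your proposal is correct, and it takes a genuinely different route from the paper. The paper's proof (following an argument of Kwa\'{s}nicki cited in the footnote) takes the a.s.\ finiteness of $S_{1,c}=\sup_{t-s<c}|B(t)-B(s)|/\omega(t-s)$ as given from~\cite{StroockVaradhan} and bootstraps: Brownian scaling plus the estimate $\omega(2t)\ge\sqrt2(1+\eps)^{-1/2}\omega(t)$ for small $t$ shows that $S_{2,c}$ is stochastically dominated by $(1+\eps)^{1/2}S_{1,c}$, while independence of $B$ on $[0,1]$ and $[1,2]$ gives $\P(S_{2,c}>x)\ge(2-\eps)\P(S_{1,c}>x)$ once the latter probability is small; iterating yields $\P(S_{1,c}>xy)\le y^{-q}$ with $q\to\infty$ as $\eps\to0$, i.e.\ polynomial tails of arbitrarily high order. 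You instead argue directly by dyadic chaining: you bound $|B(t)-B(s)|\le C\sum_{n\ge N}M_n$ when $2^{-N-1}<t-s\le 2^{-N}$, observe that $\sum_{n\ge N}(n+1)^{1/2}2^{-n/2}\lesssim(N+1)^{1/2}2^{-N/2}\asymp\omega(2^{-N-1})$, so the whole supremum is controlled by $R=\sup_n(n+1)^{-1/2}2^{n/2}M_n$, and then the union bound over $n$ and the $2^n$ dyadic increments gives $R$ a sub-Gaussian tail $\P(R>u)\le Ce^{-u^2/4}$. I checked the details and they are sound: the chaining inequality with the sum starting exactly at level $N$ is the classical one (approximate $s$ and $t$ by their level-$N$ dyadic predecessors, then telescope down one level at a time, each level contributing at most a bounded number of $M_n$'s), the geometric-sum estimate is correct, and the union bound does produce a Gaussian tail for $u\ge u_0$. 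What each approach buys: the paper's is shorter modulo the cited a.s.\ result and cleanly exhibits the self-similarity that makes $\omega_{1/2}$ the right normalization, but it only produces tails $y^{-q}$; your chaining proof is fully self-contained (it reproves the a.s.\ finiteness rather than citing it) and yields the strictly stronger sub-Gaussian tail, hence even finite exponential moments of $S^2_{1,1}$ up to the critical rate, not merely all polynomial moments. Both are legitimate; yours is arguably the more informative statement, at the cost of a slightly longer bookkeeping argument.
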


\begin{proof}
Let $S_{T,c}=
\sup_{0\le s<t\le T,\,t-s<c}\frac{|B(t)-B(s)|}{\omega(t-s)}$.
Since
$\E \sup_{0\le s<t\le 1}|B(t)-B(s)|^p
\le 2^p\E \sup_{0\le t\le 1}|B(t)|^p<\infty$,
it suffices to show that
$\E S_{1,c}^p<\infty$ for some $c=c(p)\in(0,1)$.

Let $\eps>0$.
We fix $c\in(0,1)$ so that $|\log t/\log 2t|\le 1+\eps$ for $0<t<c$.
Then $\omega(2t)\ge 2^{1/2}(1+\eps)^{-1/2}\omega(t)$ for $0<t<c$, and
\begin{align*}
S_{2,c} & =
\sup_{0\le s<t\le 2,\,t-s<c}\frac{|B(t)-B(s)|}{\omega(t-s)}
=\sup_{0\le s<t\le 1,\,t-s<c/2}\frac{|B(2t)-B(2s)|}{\omega(2(t-s))}
\\ & \le 2^{-1/2}(1+\eps)^{1/2}\sup_{0\le s<t\le 1,\,t-s<c}\frac{|B(2t)-B(2s)|}{\omega(t-s)}
=_d (1+\eps)^{1/2}S_{1,c}.
\end{align*}
Hence
\begin{equation} \label{eq:12}
\P\big(S_{1,c}>x\big)\ge \P(S_{2,c}>(1+\eps)^{1/2}x)
\quad\text{for all $x>0$}.
\end{equation}

Now $S_{1,c}\le S_{1,1}<\infty$ a.e.\ by~\cite[Exercise~2.4.8]{StroockVaradhan} (see also~\cite[Theorem~1.12]{MortersPeres}).
Hence $\P(S_{1,c}>x)<\eps$ for all sufficiently large $x$.
Let $S_{1,c}'=\sup_{1\le s<t\le 2,\,t-s<c}\frac{|B(t)-B(s)|}{\omega(t-s)}$ and note that this is an independent identically distributed copy of $S_1^c$.
Then
\begin{align*}
\P(S_{2,c}>x) & \ge
\P(\max\{S_{1,c},S_{1,c}'\}>x)
\\ & =2\P(S_{1,c}>x)-(\P(S_{1,c}>x))^2
\ge (2-\eps)\P(S_{1,c}>x).
\end{align*}
Combining this with~\eqref{eq:12} yields
\(
\P(S_{1,c}>(1+\eps)^{1/2}x)\le
(2-\eps)^{-1}\P\big(S_{1,c}>x\big)
\)
for $x$ large. By iterating, we obtain
\[
\P(S_{1,c}>(1+\eps)^{n/2}x)\le
(2-\eps)^{-n}\P\big(S_{1,c}>x\big)
\le (2-\eps)^{-n}.
\]
Setting $y=(1+\eps)^{n/2}$,
\[
\P(S_{1,c}>xy)\le  y^{-q},
\]
where $q=2\log(2-\eps)/\log(1+\eps)$.
Shrinking $\eps$ yields $q$ arbitrarily large and completes the proof.
\end{proof}

\section*{Acknowledgements}

The research of Ian Melbourne was supported in part by
FAPESP (Grant CNPJ 43.828.151/0001-45).
The research of Zhe Wang was supported by National Key R\&D Program of China (No. 2023YFA1009200) and NSFC (Grant 12501239).
Zhe Wang is grateful to Prof.\ Zhenxin Liu for helpful discussions and suggestions.
We are very grateful to Nicholas Fleming-V\'azquez for pointing out a refinement (see Remarks~\ref{rem:cf} and~\ref{rem:cf2}) that simplifies and significantly improves the results obtained in this paper and also for mentioning~\cite[Exercise~2.4.8]{StroockVaradhan}.

\end{document}